\newtheorem{theorem}{Theorem}[section]
\newtheorem{lemma}[theorem]{Lemma}
\newtheorem{proposition}[theorem]{Proposition}
\newtheorem{corollary}[theorem]{Corollary}
\theoremstyle{definition}
\newtheorem{definition}[theorem]{Definition}
\theoremstyle{remark}
\newtheorem{remark}[theorem]{Remark}
\newtheorem{example}[theorem]{Example}
\newenvironment{hypothesis}[1]{
	
	\model
}{\endmodel}
\numberwithin{equation}{section}
\newcommand{\RR}{\mathbb{R}}
\newcommand{\CC}{\mathbb{C}}
\newcommand{\NN}{\mathbb{N}}
\newcommand{\ZZ}{\mathbb{Z}}
\newcommand{\cE}{\mathcal{E}}
\newcommand{\cJ}{\mathcal{J}}
\newcommand{\cI}{\mathcal{I}}
\newcommand{\cB}{\mathcal{B}}
\newcommand{\cU}{\mathcal{U}}
\newcommand{\eps}{\varepsilon}
\newcommand{\euler}{\mathrm{e}}
\newcommand{\indic}{\mathbf{1}}
\DeclareMathOperator{\Ran}{Ran}
\DeclareMathOperator{\diam}{diam}
\newcommand*\Diff[1]{\mathop{}\!\mathrm{d}#1}
\DeclarePairedDelimiter{\abs}{|}{|}
\DeclarePairedDelimiter{\norm}{\lVert}{\rVert}
\newcommand{\bes}{\begin{equation*}}
\newcommand{\ees}{\end{equation*}}
\newcommand{\be}{\begin{equation}}
\newcommand{\ee}{\end{equation}}
\newcommand{\eqs}[1]{\begin{align*}#1\end{align*}}
\title[Uncertainty principle for Hermite functions]
{Uncertainty principle for Hermite functions and null-controllability with sensor sets of decaying density}
\subjclass[2010]{Primary 35B99; Secondary 42C05, 35Pxx, 93B05.}
\keywords{Uncertainty principles, Hermite functions, Harmonic oscillator, Null-controllability.}
\author[A.~Dicke]{Alexander Dicke}
\author[A.~Seelmann]{Albrecht Seelmann}
\author[I.~Veseli\'c]{Ivan Veseli\'c}
\address[A.D., A.S., I.V.]{
	Technische Univer\-si\-t\"at Dortmund,
	Germany
}
\urladdr{https://www.mathematik.tu-dortmund.de/lsix/}
\email{adicke.math@gmail.com}
\email{albrecht.seelmann@mathematik.tu-dortmund.de}
\email{ivan.veselic@mathematik.tu-dortmund.de}
\begin{document}
%
%

\begin{abstract}
	We establish a family of uncertainty principles for finite linear combinations of Hermite functions.
	More precisely, we give a geometric criterion on a subset $S\subset \RR^d$ ensuring that the $L^2$-seminorm associated to $S$
	is equivalent to the full $L^2$-norm on $\RR^d$ when restricted to the space of Hermite functions up to a given degree.
	We give precise estimates how the equivalence constant depends on this degree and on geometric parameters of $S$.
	From these estimates we deduce that the parabolic equation whose generator is the harmonic oscillator is null-controllable from $S$.

	In all our results, the set $S$ may have sub-exponentially decaying density and, in particular, finite volume.
	We also show that bounded sets are not efficient in this context.
\end{abstract}
\maketitle

%
%

\section{Introduction}

We establish uncertainty relations for functions in the linear span $\cE_N$ of Hermite functions up to degree $N\in\NN$.
More precisely, we show that their restriction to a properly chosen subset $S\subset \RR^d$
has equivalent norm to the function on all of $\RR^d$ with explicitly spelled out constant.
Such estimates bear various names depending on the area of analysis where they appear.
For instance, \emph{quantitative unique continuation estimate}, see, e.g.,~\cite{RousseauL-12,LogunovM-20},
\emph{uncertainty principle}, see, e.g.,~\cite{StollmannS-21}, or \emph{spectral inequality} (in the context of control theory), see, e.g.,~\cite{RousseauL-12,LaurentL-21}.
It is also closely related to the notion of \emph{vanishing order}, see, e.g.,~\cite{DonnellyF-88,LaurentL-21}, and  \emph{annihilating pairs} in Fourier analysis,
see for instance \cite{HavinJ-94,BeauchardJPS-21,EgidiNSTTV-20}.

Our estimate improves upon recent results from \cite{BeauchardJPS-21, MartinPS-22} in several aspects simultaneously:
\begin{enumerate}[(i)]
	\item The set $S$ is allowed to become sparse near infinity and may even have finite Lebesgue measure,
	\item the gaps or holes in $S$ are allowed to increase near infinity in a very general manner.
\end{enumerate}
Although this means that $S$ may be quite sparse, we show that there are still constants $d_0,d_1 > 0$ and $\zeta \in (0,1)$
such that for all $N\in\NN$ we have
\[
	\norm{f}_{L^2(\RR^d)}^2\leq d_0\euler^{d_1N^\zeta}\norm{f}_{L^2(S)}^2,\quad f\in\cE_N.
\]
Since $\Ran \indic_{(-\infty,2N+d]}(-\Delta+|x|^2) = \cE_N$, this is a so-called \emph{spectral inequality} for the harmonic oscillator $-\Delta+|x|^2$.
An important feature here is that the exponent is sublinear in $N$.
This allows for an application to null-controllability for the Hermite semigroup, see Section~\ref{sec:application} below.
For general lower semibounded, self-adjoint operators $H$ on $L^2$, such spectral inequalities take the form
\[
	\norm{f}_{L^2}^2\leq d_0\euler^{d_1\lambda^\zeta}\norm{f}_{L^2(S)}^2,\quad f\in \indic_{(-\infty,\lambda]}(H),
\]
with constants $d_0,d_1 > 0$ and $\zeta \in (0,1)$ depending on $H$ and the subset $S$ of the domain under consideration.
If such an inequality holds, we call the set $S$ \emph{efficient} (for $H$).
Note that if a set $S$ is not efficient in this sense, this does a priori \emph{not} imply that the corresponding parabolic equation is not null-controllable from $S$.

While our results belong to the realm of harmonic analysis,
they have a number of applications in various areas of the theory of partial differential equations and operators.
In this paper, we focus on just one of the applications and present it in Section~\ref{sec:application},
namely the above mentioned null-controllability for the parabolic harmonic oscillator evolution.
A wider range of applications in control theory will be presented in a forthcoming project.

Two questions arising from earlier papers triggered our research:

\subsection*{Fast decay of Hermite functions}
For the heat equation on $\RR^d$ it has recently been established independently in \cite{EgidiV-18} and \cite{WangWZZ-19} that a sensor set $S$
ensures null-controllability \emph{if and only if} it is \emph{thick} in the sense of Definition~\ref{def:thick} below.
In \cite{BeauchardJPS-21} it was shown that thickness is \emph{sufficient} for the null-controllability of the Hermite semigroup.
However, the quadratic potential of the harmonic oscillator enforces fast decay of any eigenfunction.
This begs the following question: \emph{Is it possible to control the Hermite semigroup from a sensor set $S$ that has finite measure?}

\subsection*{Reconciling phenomena on bounded and unbounded domains}
On bounded domains any set of positive Lebesgue measure can serve as a sensor or control
set for observability or null-controllability, respectively, of the heat equation, 
see, e.g.,~\cite{FursikovI-96,LebeauR-95,ApraizEWZ-14}.
As mentioned above, for the heat equation on $\RR^d$ this is no longer the case:
It can be controlled from a set $S$ if and only if $S$ is a thick set (thus excluding finite measure sets).

Note that the generator of the heat equation, that is, the Laplacian with suitable boundary conditions,
has purely discrete spectrum if the domain is bounded while it has purely continuous spectrum on $\RR^d$.
Thus one wonders how much this spectral dichotomy has to do with the different criteria for null-controllability.

Table~\ref{table:efficient-sets} juxtaposes necessary geometric criteria for efficient sets.
The second row for the harmonic oscillator already contains some of the results we prove.

\begin{table}[ht]
	\centering
	\begin{tabular}[t]{|m{1.75cm}|m{1.5cm}|m{3cm}|m{4.8cm}|}
		\hline
		operator & domain & spectral type & efficient set \\
		\hline\hline
		$-\Delta$ & bounded & purely discrete & necessarily bounded, finite measure \\
		\hline \rowcolor{lightgray}
		$-\Delta+|x|^2$ & $\RR^d$ & purely discrete & necessarily unbounded, may have finite measure \\
		\hline
		$-\Delta$ & $\RR^d$ & purely continuous & necessarily unbounded, infinite measure\\
		\hline
	\end{tabular}
	\vspace{1em}
	\caption{Geometric criteria for efficient sets}
	\label{table:efficient-sets}
\end{table}

The above raises the following natural question: \emph{Can one develop a better understanding of both `extremal cases' by studying interpolating models?}
The harmonic oscillator may serve as such an interpolating model:
On the one hand it is defined on the whole of $\RR^d$ but, on the other hand, it exhibits purely discrete spectrum.
Thus, it shares properties with both the Laplacian on bounded and unbounded domains.
We make this intuition more precise at the end of Subsection~\ref{ssec:main-cubes} below.

\subsection*{Outline}
The rest of this paper is organized as follows: Our main results are given in Section~\ref{sec:main-result}, where we also compare our results to previous ones.
An application in control theory for the Hermite semigroup is spelled out in Section~\ref{sec:application}.
Thereafter, in Section~\ref{sec:core-result}, we state our core result, Theorem~\ref{thm:gen}.
Section~\ref{sec:local-estimate} collects technical ingredients from previous papers for the proof of Theorem~\ref{thm:gen}, which is given in Section~\ref{sec:proof}.
Theorems~\ref{thm:main-result-fixed-cubes} and~\ref{thm:main-result-increasing-balls} are then deduced in Section~\ref{sec:coverings}.

\subsection*{Acknowledgments}
A.D.\ and A.S.\ have been partially supported by the DFG grant VE 253/10-1 entitled \emph{Quantitative unique continuation properties of elliptic PDEs with variable 2nd order coefficients and applications in control theory, Anderson localization, and photonics}.

%
%

\section{Main results and discussion}\label{sec:main-result}

We denote by $\cE_N$, $N\in\NN$, the space spanned by the Hermite functions of degree at most $N$, that is linear combinations of functions
\[
	\Phi_\alpha(x) = \prod_{j=1}^d \phi_{\alpha_j}(x_j), \quad \alpha = (\alpha_1,\dots,\alpha_d)\in\NN_0^d,\quad x=(x_1,\dots,x_d)\in\RR^d,
\]
with $|\alpha|\leq N$.
Here,
\[
	\phi_k(t) = \frac{(-1)^k}{\sqrt{2^kk!\sqrt{\pi}}}\euler^{t^2/2}\frac{\Diff{}^k}{\Diff{t}^k}\euler^{-t^2},\quad k\in\NN_0,
\]
denotes the $k$-th standard Hermite function.
Moreover, for $\rho > 0$ and $k \in (\rho\ZZ)^d$ we denote by $\Lambda_\rho(k)=k+(-\rho/2,\rho/2)^d$ the cube with sides of length $\rho$ centered at $k$.
A particular case of our general result reads as follows.

\begin{theorem}\label{thm:main-result-fixed-cubes}
	Let $S \subset \RR^d$ be measurable satisfying
	\be\label{eq:control-set-fixed-cubes}
		\frac{\abs{S \cap \Lambda_\rho(k)}}{\abs{\Lambda_\rho(k)}}
		\geq
		\gamma^{1+\abs{k}^\beta}
		\quad\text{for all}\quad
		k \in (\rho\ZZ)^d
	\ee
	with some fixed $\beta \in [0,1)$, $\rho > 0$, and $\gamma \in (0,1)$.

	Then, there is a universal constant $K \geq 1$ such that for every  $N \in \NN$ we have
	\be\label{eq:main-result-fixed-cubes}
		\norm{f}_{L^2(S)}^2
		\geq
		3\Bigl( \frac{\gamma}{K^d} \Bigr)^{K d^{5/2+\beta} (1+\rho)^2N^{(1+\beta)/2}}
		\norm{f}_{L^2(\RR^d)}^2 \quad \text{for all}\quad f \in \cE_N
		.
	\ee
\end{theorem}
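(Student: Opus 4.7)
The plan is to deduce Theorem~\ref{thm:main-result-fixed-cubes} from the core general statement Theorem~\ref{thm:gen}, which is designed to accept a covering by cubes together with individual density lower bounds per cube. Since the family $\{\Lambda_\rho(k):k\in(\rho\ZZ)^d\}$ already tiles $\RR^d$, I would feed it directly into Theorem~\ref{thm:gen} as the covering; the remaining work is bookkeeping that converts the $k$-dependent density $\gamma^{1+\abs{k}^\beta}$ into the claimed exponent $N^{(1+\beta)/2}$ in $N$.

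The first step is to localize to the classically allowed region. Any $f\in\cE_N$ lies in the spectral subspace of $-\Delta+\abs{x}^2$ below $2N+d$, and hence exhibits Agmon-type Gaussian decay outside the ball of radius $R_N\sim\sqrt{N}$. Consequently, only cubes with $\abs{k}\lesssim R_N$ contribute meaningfully, and on each such cube the hypothesis \eqref{eq:control-set-fixed-cubes} supplies the uniform lower bound $\gamma^{1+R_N^\beta}\geq\gamma^{1+c_1 N^{\beta/2}}$. The contribution of the remaining cubes should be absorbed into a constant prefactor.

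The second step is to track the constant produced by Theorem~\ref{thm:gen}. On a single cube of side $\rho$ with local density $\delta$, a Remez--Tur\'an-type inequality (of the kind collected in Section~\ref{sec:local-estimate}) bounds $\norm{f}_{L^2(\Lambda_\rho(k))}^2$ by a factor of the form $(C/\delta)^{c\mu}\norm{f}_{L^2(S\cap\Lambda_\rho(k))}^2$, where $\mu$ is a local notion of polynomial degree associated to $f$ on that cube, of order $\rho\sqrt{N}$ for $f\in\cE_N$. Patching these local estimates via the propagation mechanism of Theorem~\ref{thm:gen} and inserting the uniform density lower bound above yields a constant whose logarithm is of order $\rho\sqrt{N}\cdot(1+N^{\beta/2})\log(1/\gamma)$, with leading term $N^{(1+\beta)/2}$, together with a $(1+\rho)^2$ prefactor from the cube geometry. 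The dimensional $d^{5/2+\beta}$ factor accumulates from counting lattice points inside the effective ball of radius $R_N$ and from the standard volume/geometry constants in the patching step.

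The main obstacle will be the optimal choice of the truncation radius $R_N$: too small an $R_N$ fails to absorb the Gaussian tail and forces additional cubes into the covering; too large an $R_N$ inflates the worst-case density through the factor $R_N^\beta$ and degrades the exponent in $N$. The balance that yields exactly $N^{(1+\beta)/2}$ is precisely the matching $R_N\sim\sqrt{N}$ with the classical turning radius of the harmonic oscillator, and the delicate point is propagating the quasi-mode Gaussian estimates with explicit constants all the way through Theorem~\ref{thm:gen} so that no spurious $N$-dependence is introduced along the way.
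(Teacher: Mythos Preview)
Your plan is correct and matches the paper's proof: feed the cube tiling into Theorem~\ref{thm:gen} with $\kappa=1$, $\eps=1$, $D=d\rho$, then use $|k|\leq 2CN^{1/2}$ on $\cJ_c$ to rewrite $\gamma^{1+|k|^\beta}\geq(\gamma^{2(2C)^\beta})^{N^{\beta/2}}$ and apply the theorem with $\alpha=\beta$.

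Two points where your write-up drifts from what actually happens. First, there is no optimization of the truncation radius: $R_N=CN^{1/2}$ with $C=32d$ is already hard-wired into the definition of $\cJ_c$ in Hypothesis~\ref{hypothesis} (via Lemma~\ref{lem:choice_Jc}), so the ``delicate balance'' you describe is not a step in the proof---you simply verify the hypothesis and invoke the theorem. Second, the factor $d^{5/2+\beta}$ does not come from counting lattice points; it arises from the explicit constant in Theorem~\ref{thm:gen}, namely $\sqrt{d}\,D(D+1)$ with $D=d\rho$ contributing $d^{5/2}$, and the replacement $\gamma\mapsto\gamma^{2(2C)^\beta}$ contributing $(64d)^\beta$.
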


For $S$ to be an efficient set, we need that the exponent of $N$ is smaller than one.
The latter is obviously satisfied with the hypothesis $\beta < 1$.
However, our more general result in Section~\ref{sec:core-result} shows that \eqref{eq:main-result-fixed-cubes} holds for all $\beta \geq 0$.

The case $\beta = 0$ in \eqref{eq:control-set-fixed-cubes} corresponds essentially to so-called \emph{thick} sets:
algorithmx.sty
\begin{definition}[Thick set]\label{def:thick}
	Let $\gamma \in (0,1]$ and $\rho>0$.
	A measurable set $S\subset \RR^d$ is called \emph{$(\gamma , \rho)$-thick} if $\abs{S \cap \Lambda_\rho(x)}\geq \gamma \abs{\Lambda_\rho(x)}$
	for all $x \in \RR^d$.
\end{definition}

If $S$ is a thick set, an uncertainty principle for Hermite functions was established in \cite[Theorem~2.1\,(iii)]{BeauchardJPS-21}.
Hence, our Theorem~\ref{thm:main-result-fixed-cubes} covers and extends this result.

However, \cite[Theorem~2.1\,(ii)]{BeauchardJPS-21} also considers sets $S$ that are not thick, but satisfy
\be\label{eq:condition-BJPS}
	\liminf_{R\to\infty}\frac{\abs{S\cap B(0,R)}}{|B(0,R)|}
	>
	0
	.
\ee
In this case \cite{BeauchardJPS-21} obtains an uncertainty relation with exponent linear in $N$, which is not enough for control theory.
By contrast, our result with $\beta \in (0,1)$ gives a class of efficient sets $S$ satisfying
\be\label{eq:condition-BJPS-counterexample}
	\liminf_{R\to\infty}\frac{\abs{S\cap B(0,R)}}{|B(0,R)|}
	=
	0
	.
\ee
In order to demonstrate this, we give an easy example.

\begin{example}
	Let $\beta,\gamma \in (0,1)$, $\rho=1$, and set
	\[
		S
		=
		\bigcup_{k\in\ZZ^d} \Lambda_{r_k}(k)\quad\text{with}\quad r_k = \frac{1}{2}\gamma^{(1+\abs{k}^\beta)/d}
		.
	\]
	Then, $S$ satisfies
	\[
		\frac{\abs{S \cap \Lambda_\rho(k)}}{\abs{\Lambda_\rho(k)}}
		\geq
		\gamma^{1 + \abs{k}^\beta}
		\quad\text{ for all}\quad
		k \in \ZZ^d
		,
	\]
	so that the hypotheses of Theorem~\ref{thm:main-result-fixed-cubes} are satisfied, and we obtain
	\bes
		\norm{f}_{L^2(S)}^2
		\geq
		3\Bigl( \frac{\gamma}{K^d} \Bigr)^{4K d^{7/2} N^{(1+\beta)/2}}
		\norm{f}_{L^2(\RR^d)}^2 \ \text{ for all } f \in \cE_N
		.
	\ees
	Note that $(1+\beta)/2< 1$,  while on the other hand, the set $S$ has finite measure since $\gamma \in (0,1)$, so that \eqref{eq:condition-BJPS-counterexample} holds.
\end{example}

\begin{remark}
	In a recent parallel development, null-controllability of the (time dependent) Schr\"odinger equation with quadratic potential
	has been studied in \cite{HuangWW-22}  and \cite{MartinPS-21}.
	To the best of our knowledge, our Theorem~\ref{thm:main-result-fixed-cubes} is the first result that treats efficient sets for the harmonic oscillator
	from which the corresponding Schr\"odinger equation is not null-controllable.
	Indeed, in \cite[Theorem~2.2]{MartinPS-21} it has been shown that condition \eqref{eq:condition-BJPS} is equivalent
	to the exact null-controllability of the Schr\"odinger equation with quadratic potential in dimension $d=1$.
	A similar condition is necessary in dimension $d\geq 2$, see \cite[Theorem~2.4]{MartinPS-21}.
\end{remark}

We have already given an example of an efficient set for the harmonic oscillator with finite measure.
In view of Table~\ref{table:efficient-sets}, we now show that a bounded set can not be an efficient set for the harmonic oscillator; see also \cite[Section 4.2.3]{Miller-08}. 
For simplicity, we here only consider the one-dimensional case.
Note however, that this does not yet prove that null-controllability of the Hermite semigroup is impossible from a bounded control set.

\begin{example}
	Let $M>0$ and $S = [-M,M]$.
	Assume that for all $N\in\NN$ the uncertainty relation
	\be\label{eq:spectral-inequality-counterexample}
		\norm{f}_{L^2(\RR)}^2
		\leq
		C\euler^{C'h(N)} \norm{f}_{L^2(S)}^2,
		\quad f\in\cE_N
		,
	\ee
	holds with some non-negative function $h$ and constants $C,C'>0$ independent from $N$.
	
	Consider $f_N(x)=x^N \euler^{-|x|^2/2}$.
	Then $f_N\in\cE_N$ and it is easy to calculate that
	\bes
		\norm{f_N}_{L^2(\RR)}^2
		=
		\Gamma\Bigl( N+\frac{1}{2}\Bigr)\quad\text{and}\quad\norm{f_N}_{L^2(S)}^2
		\leq
		\sqrt{\pi}M^{2N}
		,
	\ees
	where $\Gamma$ denotes the Gamma function.
	Plugging these into \eqref{eq:spectral-inequality-counterexample}, we derive
	\bes
		\Gamma\Bigl( N+\frac{1}{2}\Bigr)
		\leq
		\sqrt{\pi}C\euler^{C'h(N)+2N\log M},
		\quad N\in\NN
		.
	\ees
	Using Stirling's formula for $\Gamma$, this can only hold if $h(N)\geq N\log N$ for large $N$ and, in particular,
	$h$ can not be of the form $h(N)=N^\beta$ with $\beta \leq 1$.
	This also shows that \cite[Theorem~2.1\,(i)]{BeauchardJPS-21} is best possible in this situation.
\end{example}

\begin{remark}
	In dimension $d\geq 2$, a similar argument with the function $f_N(x)=x_j^N\euler^{-|x|^2/2}$, $j\in\{1,\dots,d\}$, shows that
	a set can not be efficient if it has bounded intersection with any hyperplane.
\end{remark}

In \cite{MartinPS-22}, the authors studied the case where the density of the set $S$ is considered not with respect to a fixed scale $\rho$,
but rather a variable one $\rho = \rho(x)$, which is even allowed to grow sublinearly.
Our technique allows to recover this result, while getting rid of some technical assumptions on $\rho(x)$.
More importantly, we cover again cases where the density of $S$ decays at infinity.
In order to put the corresponding result into the context of \cite[Theorem~2.1]{MartinPS-22}, we slightly change the geometry from cubes to balls.

\begin{theorem}\label{thm:main-result-increasing-balls}
	Let $\rho\colon\RR^d\to (0,\infty)$ be any function that satisfies
	\[
		\rho(x)\leq R(1+|x|^2)^{\frac{1-\eps}{2}}
		\quad\text{for all}\quad
		x \in \RR^d
	\]
	with $R>0$ and $\eps\in(0,1]$.
	Suppose that $S\subset\RR^d$ is a measurable set with
	\[
		\frac{\abs{S\cap B(x,\rho(x))}}{\abs{B(x,\rho(x))}}
		\geq
		\gamma^{1+|x|^\alpha}
		\quad\text{for all}\quad
		x \in \RR^d
	\]
	for some fixed $\alpha\in [0,\eps)$ and $\gamma \in (0,1)$.
	
	Then, there is a universal constant $K\geq 1$ such that for all $N\in\NN$ we have
	\[
		\norm{f}_{L^2(S)}^2
		\geq
		3\Bigl(\frac{\gamma}{K^d}\Bigr)^{K^{1+\alpha}d^{(11+3\alpha)/2}(1+R)^2N^{1-\frac{\eps-\alpha}{2}}}\norm{f}_{L^2(\RR^d)}^2,
		\quad f\in\cE_N
		.
	\]
\end{theorem}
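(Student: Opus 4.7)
The plan is to deduce Theorem~\ref{thm:main-result-increasing-balls} from the core result Theorem~\ref{thm:gen} by a covering argument, in parallel with the proof of Theorem~\ref{thm:main-result-fixed-cubes}. The essential preparatory step is to convert the variable-radius ball density condition into a density condition on a family of \emph{cubes} that is directly accessible to Theorem~\ref{thm:gen}.

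First I would pass from balls to circumscribed cubes: for every $x\in\RR^d$, the inclusion $B(x,\rho(x))\subset \Lambda_{2\rho(x)}(x)$ together with the hypothesis on $S$ yields
\[
    \frac{\abs{S\cap \Lambda_{2\rho(x)}(x)}}{\abs{\Lambda_{2\rho(x)}(x)}}
    \geq
    c_d\,\gamma^{1+\abs{x}^\alpha}
\]
with a purely dimensional constant $c_d>0$ coming from the ratio between the volume of the unit ball and $2^d$. This factor $c_d$ is harmless and can be absorbed into the universal constant $K$ appearing in the conclusion.

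Next I would select a family of centers and sizes that covers $\RR^d$ with controlled overlap. Since $\rho(x)$ may be small near the origin but grows at most like $R(1+\abs{x}^2)^{(1-\eps)/2}$, a convenient choice is an annular decomposition $A_n=\{x\in\RR^d:2^n\leq\abs{x}<2^{n+1}\}$ (plus a bounded central region) combined with a cubic sub-lattice of spacing of order $R\cdot 2^{n(1-\eps)}$ inside each $A_n$. To each lattice point $x_j$ one associates a cube $\Lambda_{\rho_j}(x_j)$ with $\rho_j\asymp \rho(x_j)$ chosen so that $B(x_j,\rho(x_j))\subset\Lambda_{\rho_j}(x_j)$ and neighboring cubes still cover the annulus. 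The resulting family satisfies $\rho_j\lesssim R(1+\abs{x_j}^2)^{(1-\eps)/2}$ together with the density lower bound $c_d\gamma^{1+\abs{x_j}^\alpha}$, which matches the format Theorem~\ref{thm:gen} expects.

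Finally I would plug this family into Theorem~\ref{thm:gen}. The interplay between the two sublinear growth exponents $1-\eps$ (for the cube side length) and $\alpha$ (for the density decay) dictates the final exponent on $N$: heuristically, the configuration behaves like the fixed-scale setting of Theorem~\ref{thm:main-result-fixed-cubes} with an effective $\beta_\text{eff}=(1-\eps)+\alpha$, so that the exponent $(1+\beta_\text{eff})/2$ predicted there becomes $1-(\eps-\alpha)/2$; the hypothesis $\alpha<\eps$ keeps this strictly below one. The main obstacle I anticipate is the bookkeeping in this last step: isolating the dependence on $d$, $R$, $\alpha$ and $\eps$ inside the constants delivered by Theorem~\ref{thm:gen} and matching it to the stated prefactor $K^{1+\alpha}d^{(11+3\alpha)/2}(1+R)^2$, which encodes the precise dimensional blow-up under the annular covering and the absorption of $c_d$.
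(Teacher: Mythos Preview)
Your annular--lattice construction has a genuine gap. The hypothesis gives only an \emph{upper} bound $\rho(x)\le R(1+|x|^2)^{(1-\eps)/2}$; there is no lower bound and no regularity assumed, so $\rho$ may be arbitrarily small everywhere (for instance $\rho\equiv\delta$ with $\delta$ tiny). At a lattice point $x_j$ in the annulus $A_n$ you therefore cannot have both $\rho_j\asymp\rho(x_j)$ and ``neighbouring cubes still cover the annulus'': if $\rho_j\asymp\rho(x_j)$ the cubes may be far smaller than the lattice spacing $\sim R\,2^{n(1-\eps)}$ and fail to cover, while if you force $\rho_j$ to match the lattice spacing then the only density information you have on $\Lambda_{\rho_j}(x_j)$ carries the uncontrolled factor $(\rho(x_j)/\rho_j)^d$ and \eqref{eq:gen_assumption} is lost. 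Your opening remark that ``$\rho(x)$ may be small near the origin but grows\dots'' suggests you are implicitly assuming $\rho$ is comparable to its upper bound, which the theorem does not require.

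The paper's remedy is to abandon the lattice altogether and apply the Besicovitch covering theorem to the bounded set $A=B(0,CN^{1/2})$ and the family $\{\overline{B(x,\rho(x))}:x\in A\}$, extracting a subfamily $Q_k=B(y_k,\rho(y_k))$ that covers $A$ with overlap at most $K^d$, regardless of how small or irregular the radii are. Crucially, Hypothesis~\ref{hypothesis} admits arbitrary convex open $Q_k$, so the balls themselves qualify and the density condition \eqref{eq:gen_assumption} is then \emph{exactly} the hypothesis on $S$, with no intermediate loss; your detour through circumscribed cubes is unnecessary. Conditions (i)--(iii) follow directly from $|y_k|\le CN^{1/2}$, and the bookkeeping for the final constant is then routine.
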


The previous theorem extends the main result of \cite{MartinPS-22} in the following sense:
We do not assume that $\rho$ is $\frac{1}{2}$-Lipschitz and we allow a certain subexponential decay of the density of the set $S$.
In addition, our $\rho$ does not need to be bounded away from zero.
Note also that \cite{MartinPS-22} studies the case where $\rho$ is merely continuous, but only for certain choices of $\gamma$ and $R$.
This case is also covered by our result.
Moreover, the constant in Theorem~\ref{thm:main-result-increasing-balls} is explicit in all parameters.

Both our Theorems~\ref{thm:main-result-fixed-cubes} and~\ref{thm:main-result-increasing-balls} are particular cases of a more general result, Theorem~\ref{thm:gen} below.
How to derive these two theorems from the general result is discussed in Section~\ref{sec:coverings} below.

%
%

\section{Application to null-controllability}\label{sec:application}

In this section we show how to derive null-controllability of the parabolic evolution driven by the harmonic oscillator.
The shortest route to arrive at this claim is to spell out a criterion for observability that we take from \cite{NakicTTV-20}.
It was inspired by previous similar results, see, e.g.,~\cite{TenenbaumT-11,BeauchardPS-18}.

\begin{theorem}[{\cite[Theorem~2.8]{NakicTTV-20}}]\label{thm:obs_and_control}
	Let $H$ be a non-negative, self-adjoint operator on $L^2(\RR^d)$, and let $S \subset \RR^d$ be measurable.
	Assume that there are $d_0 > 0$, $d_1 \geq 0$ and $\zeta \in (0,1)$
	such that for all $\lambda > 0$ and all $\varphi \in L^2(\RR^d)$ we have
	\be\label{eq:UCP}
		\lVert \indic_{(-\infty,\lambda]}(H) \varphi \rVert^2
		\leq
		d_0 \euler^{d_1 \lambda^\zeta}
		\lVert \indic_{(-\infty,\lambda]}(H) \varphi \rVert_{L^2(S)}^2.
	\ee
	Then, for all $T > 0$ and all $\varphi \in L^2(\RR^d)$ we have the observability estimate
	\bes
		\bigl\lVert \euler^{-HT} \varphi \bigr\rVert^2
		\leq
		C_{\mathrm{obs}}^2
		\int_0^T  \bigl\lVert \euler^{-Ht} \varphi \bigr\rVert_{L^2(S)}^2 \Diff{t} ,
	\ees
	where $C_{\mathrm{obs}}$ satisfies
	\be\label{eq:c-obs-upper-bound}
		C_{\mathrm{obs}}^2
		=
		\frac{C_1 d_0}{T}K_1^{C_2}\exp \left(C_3\left( \frac{d_1 }{T^\zeta} \right)^{\frac{1}{1 - \zeta}} \right)
		\quad \text{with} \quad K_1 = 2 d_0  + 1 .
	\ee
	Here, $C_1,C_2,C_3>0$ are constants depending only on $\zeta$.
\end{theorem}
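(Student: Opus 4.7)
The plan is to apply the Lebeau--Robbiano style telescoping argument, trading the spectral inequality \eqref{eq:UCP} at energy $\lambda$ against the dissipative decay $\|\indic_{(\lambda,\infty)}(H)\euler^{-Ht}\varphi\|^2 \le \euler^{-2\lambda t}\|\varphi\|^2$ that the non-negativity of $H$ provides on the high-frequency part of the evolution. The core idea is that \eqref{eq:UCP} controls the low-frequency part pointwise in $t$ by the sensor norm, while dissipation controls the high-frequency part across short time intervals, and the right choice of scales makes the two balance.

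Concretely, I would fix $T>0$, set $u(t)=\euler^{-Ht}\varphi$ (with $\|u(t)\|$ non-increasing), and choose a nested decreasing sequence $T = T_0 > T_1 > \cdots$ with $T_k\downarrow 0$ together with an increasing sequence of cutoff energies $\lambda_0 < \lambda_1 < \cdots$; write $\tau_k = T_k - T_{k+1}$ and $P_k = \indic_{(-\infty,\lambda_k]}(H)$. On each slab $[T_{k+1},T_k]$ I decompose $u(T_k) = P_k u(T_k) + P_k^\perp u(T_k)$, apply \eqref{eq:UCP} to the low-frequency piece, bound $\|P_k u(T_k)\|_{L^2(S)}^2$ from above by $\tau_k^{-1}\int_{T_{k+1}}^{T_k}\|u(t)\|_{L^2(S)}^2\,dt$ via a mean-value-in-$t$ step (picking up only errors of order $\|P_k^\perp u\|^2$ from the mismatch between $u(T_k)$ and $u(t)$), and estimate the high-frequency remainder by $\|P_k^\perp u(T_k)\|^2 \le \euler^{-2\lambda_k\tau_k}\|u(T_{k+1})\|^2$. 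Combining these yields a two-term recursion
\begin{equation*}
	\|u(T_k)\|^2 \le a_k \int_{T_{k+1}}^{T_k}\|u(t)\|_{L^2(S)}^2\,dt + b_k \|u(T_{k+1})\|^2,
\end{equation*}
with $a_k \lesssim d_0\euler^{d_1\lambda_k^\zeta}/\tau_k$ and $b_k \lesssim d_0\euler^{d_1\lambda_k^\zeta - 2\lambda_k\tau_k}$. Iterating downward from $k=0$, the slabs telescope into $(0,T]$ and, provided the product $\prod_{j\ge 0} b_j \to 0$ absorbs the trailing boundary term, one arrives at
\begin{equation*}
	\|u(T)\|^2 \le \Bigl(\sum_{k\ge 0} a_k \prod_{j<k} b_j\Bigr)\int_0^T \|u(t)\|_{L^2(S)}^2\,dt.
\end{equation*}

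The main obstacle is to optimise $\lambda_k$ and $\tau_k$ against the sublinear spectral exponent $\zeta<1$ so that this series converges with the explicit constant \eqref{eq:c-obs-upper-bound}. Pointwise minimisation of $d_1\lambda^\zeta - 2\lambda t$ in $\lambda$ gives the critical scale $\lambda_\star(t)\sim (d_1/t)^{1/(1-\zeta)}$ and a contribution of order $\exp\bigl(C(d_1/t^\zeta)^{1/(1-\zeta)}\bigr)$ at time $t$. Taking the slab widths $\tau_k = c T q^k$ geometrically decreasing for some fixed $q\in(0,1)$ and selecting $\lambda_k$ so that $\lambda_k\tau_k$ dominates $d_1\lambda_k^\zeta$ by a uniform factor renders the series geometric; dyadic summation over the partition $(0,T]$ then produces the factor $\exp\bigl(C_3(d_1/T^\zeta)^{1/(1-\zeta)}\bigr)$. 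The remaining prefactors $C_1 d_0/T$ and $K_1^{C_2}$, with $K_1 = 2d_0 + 1$, emerge respectively from the geometric summation of the $a_k$ and from the recursive way $d_0$ is re-absorbed at each iteration step, and amount to pure book-keeping once the scaling is fixed.
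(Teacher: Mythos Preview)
The paper does not prove this theorem at all: it is quoted verbatim from \cite[Theorem~2.8]{NakicTTV-20} and used as a black box to pass from the spectral inequality to observability and null-controllability in Section~\ref{sec:application}. So there is no ``paper's own proof'' to compare against here.

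That said, your sketch is exactly the Lebeau--Robbiano telescoping argument that underlies the cited result (and its precursors such as \cite{TenenbaumT-11,BeauchardPS-18}): split into low/high frequencies via $P_k=\indic_{(-\infty,\lambda_k]}(H)$, use the spectral inequality on the low part and exponential dissipation on the high part over geometrically shrinking time slabs, optimise $\lambda_k$ against $\tau_k$ via the critical scale $\lambda\sim(d_1/t)^{1/(1-\zeta)}$, and sum. The identification of where the factors $C_1 d_0/T$, $K_1^{C_2}$, and $\exp\bigl(C_3(d_1/T^\zeta)^{1/(1-\zeta)}\bigr)$ come from is also correct. The only point where your outline is slightly schematic is the ``mean-value-in-$t$'' step replacing $\|P_k u(T_k)\|_{L^2(S)}^2$ by a time average: in the actual proofs one typically splits each slab in two and uses the semigroup smoothing on one half to control $\|P_k u\|_{L^2(S)}$ pointwise in $t$ rather than invoking a mean-value argument, but this is a cosmetic rearrangement that does not affect the final constant's structure.
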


By canonical arguments, see, e.g.,~\cite{Zuazua-06,TucsnakW-09,Coron-07,EgidiNSTTV-20}, the conclusion of the theorem implies that for any initial value
$\varphi_0\in L^2(\RR^d)$ and any positive time $T$ we can find a control function $u \in L^2(\RR^d\times [0,T])$ such that the mild solution to
\be\label{eq:control_system}
	\dot{\varphi} + H\varphi = \indic_S u,
	\quad
	\varphi(0)=\varphi_0
	,
\ee
satisfies $\varphi(T) \equiv 0$.
This is called \emph{null-controllability} of the parabolic equation \eqref{eq:control_system}.
The above theorem also implies that in this case the so-called \emph{control cost}, that is
\eqs{
	C_T
	:=
	\sup_{\Vert \varphi_0 \Vert = 1} \min
	\Bigl\{ \lVert u \rVert_{L^2(\RR^d\times [0,T]))} \colon \euler^{-H T} \varphi_0 +  \int_0^T \euler^{-  (T-s)H} \indic_S u(\tau) \Diff{\tau} = 0 \Bigr\}
	,
}
satisfies
\[
	C_T\leq C_{\mathrm{obs}}
\]
with $C_{\mathrm{obs}}$ as in \eqref{eq:c-obs-upper-bound}.

We apply this now to the harmonic oscillator $H = -\Delta + |x|^2$, where
\[
	\Ran \indic_{(-\infty,\lambda]}(H) = \cE_{N}\quad\text{for}\quad 2N+d\leq \lambda < 2(N+1)+d.
\]
In this case, Theorems~\ref{thm:main-result-fixed-cubes} and~\ref{thm:main-result-increasing-balls} imply the following result.

\begin{theorem}\label{thm:main-null-control}
	Let $H= -\Delta + |x|^2$ denote the harmonic oscillator and let $S\subset \RR^d$ be a measurable subset such that
	\begin{enumerate}
		\item[(a)]
		there exist  $\rho>0$, $\beta \in [0,1)$ and $\gamma \in (0,1)$ with
		\bes
				\frac{\abs{S \cap \Lambda_\rho(k)}}{\abs{\Lambda_\rho(k)}}
				\geq
				\gamma^{1+\abs{k}^\beta}
				\quad\text{ for all }\quad
				k \in (\rho\ZZ)^d
		\ees
	\end{enumerate}	
	or
	\begin{enumerate}
		\item[(b)]
		there exist  $R>0$, $\eps\in(0,1]$, $\alpha\in [0,\eps)$, $\gamma \in (0,1)$ and $\rho\colon\RR^d\to (0,\infty)$ with
		\eqs{
			\rho(x)\leq R(1+|x|^2)^{\frac{1-\eps}{2}} &
			\quad\text{for all}\quad
			x \in \RR^d
		}
		and
		\eqs{
			\frac{\abs{S\cap B(x,\rho(x))}}{\abs{B(x,\rho(x))}}
			\geq
			\gamma^{1+|x|^\alpha}
			& \quad\text{for all}\quad
			x \in \RR^d
			.
		}
	\end{enumerate}	
	Then the parabolic equation \eqref{eq:control_system} is null-controllable.
\end{theorem}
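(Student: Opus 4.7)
The plan is to reduce the claim to Theorem~\ref{thm:obs_and_control} applied to $H = -\Delta + \abs{x}^2$, and then to invoke the standard observability-to-null-controllability duality mentioned in the paragraph after \eqref{eq:c-obs-upper-bound}. Concretely, what one needs is to upgrade the bound on $\cE_N$ of Theorem~\ref{thm:main-result-fixed-cubes} (resp.\ Theorem~\ref{thm:main-result-increasing-balls}) to an inequality of the form \eqref{eq:UCP} in the spectral parameter $\lambda$, with some exponent $\zeta \in (0,1)$. Theorem~\ref{thm:obs_and_control} then yields observability of the Hermite semigroup from $S$, which in turn gives null-controllability of \eqref{eq:control_system}.

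The translation from $N$ to $\lambda$ uses the spectral identity recalled just before the statement, namely $\Ran \indic_{(-\infty,\lambda]}(H) = \cE_N$ for $2N + d \leq \lambda < 2(N+1) + d$. For any $\varphi \in L^2(\RR^d)$, the function $f := \indic_{(-\infty,\lambda]}(H)\varphi$ therefore lies in $\cE_N$ with $N \leq \lambda/2$. In case~(a), plugging this into Theorem~\ref{thm:main-result-fixed-cubes} and rewriting the prefactor as an exponential via $(K^d/\gamma)^A = \euler^{A \log(K^d/\gamma)}$ produces a bound of the form
\[
    \norm{f}_{L^2(\RR^d)}^2 \leq d_0\, \euler^{d_1 \lambda^{(1+\beta)/2}}\, \norm{f}_{L^2(S)}^2
\]
with explicit constants $d_0, d_1 > 0$ depending only on $d, \rho, \beta, \gamma$. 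Case~(b) is handled in exactly the same way via Theorem~\ref{thm:main-result-increasing-balls}, producing the exponent $\lambda^{1-(\eps-\alpha)/2}$ with constants depending on $d, R, \eps, \alpha, \gamma$.

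The one substantive point to check — and the main, if mild, obstacle — is that the resulting exponents in $\lambda$ lie strictly below one. In case~(a) this is $\zeta = (1+\beta)/2 \in [1/2,1)$, which uses exactly the assumption $\beta < 1$; in case~(b) it is $\zeta = 1 - (\eps-\alpha)/2 \in [1/2,1)$, which uses exactly $\alpha < \eps$. With $\zeta \in (0,1)$ verified, \eqref{eq:UCP} holds for the constructed $d_0, d_1, \zeta$, and Theorem~\ref{thm:obs_and_control} supplies the desired observability estimate for $\euler^{-HT}$ on $L^2(\RR^d)$ with sensor set $S$. The canonical duality argument then produces, for every $T > 0$ and every initial datum $\varphi_0$, a control $u \in L^2(\RR^d \times [0,T])$ driving \eqref{eq:control_system} to zero at time $T$, which is the claimed null-controllability. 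As a by-product, \eqref{eq:c-obs-upper-bound} yields an explicit upper bound on the control cost $C_T$ in terms of $T$ and the geometric parameters of $S$.
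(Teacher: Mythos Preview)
Your proposal is correct and follows essentially the same approach as the paper: the paper itself does not give a separate proof of this theorem but simply remarks that, via the spectral identity $\Ran \indic_{(-\infty,\lambda]}(H)=\cE_N$ for $2N+d\leq \lambda<2(N+1)+d$, Theorems~\ref{thm:main-result-fixed-cubes} and~\ref{thm:main-result-increasing-balls} feed directly into Theorem~\ref{thm:obs_and_control}, after which the standard duality yields null-controllability. Your write-up spells out the conversion $N\leq \lambda/2$ and the verification that $\zeta=(1+\beta)/2<1$ (resp.\ $\zeta=1-(\eps-\alpha)/2<1$), which is exactly the content the paper leaves implicit.
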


\begin{example}
	Assume that $S$ is as in (b) for some $\eps\in (0,1]$ with $\alpha=\eps/2$.
	Then \eqref{eq:UCP} holds with $\zeta = 1 - \eps/4$, $d_0=1$, and
	\[
		d_1 = K^2d^7(1+R)^2 \log \Bigl(\frac{\gamma}{K^d}\Bigr).
	\]
	The upper bound \eqref{eq:c-obs-upper-bound} for the observability constant then becomes
	\[
		C_{\mathrm{obs}}^2
		\leq
		\frac{C_1 3^{C_2}}{T} \exp\Bigl( \frac{C_3K^{8/\eps}d^{28/\eps}(1+R)^{8/\eps}}{T^{4/\eps - 1}}\Bigr)
		.
	\]
	In the particular situation $\eps = 1$, this gives
	\[
		C_{\mathrm{obs}}^2
		\leq
		\frac{C_1 3^{C_2}}{T} \exp\Bigl( \frac{C_3K^{8}d^{28}(1+R)^{8}}{T^{3}}\Bigr)
		,
	\]
	which agrees (up to constants) with the observability constant we get if $S$ is as in (a) with $\beta=1/2$
	(where the parameter $R$ in (b) corresponds to the parameter $\rho$ in (a)).
\end{example}

%
%

\section{A reduction argument and the general theorem}\label{sec:core-result}

In the proofs an interplay between global and local properties of elements of $\cE_N$ plays a crucial role.
Hence, it is natural to decompose the unbounded domain $\RR^d$ into a collection of subsets.
Throughout this section, let $(Q_k)_{k \in \cJ}$ be any finite or countably infinite family of measurable subsets $Q_k \subset \RR^d$ and $\kappa \geq 1$ such that
\be\label{eq:covering-general-assumption}
	\Bigl|\RR^d \setminus \bigcup_{k \in \cJ} Q_k\Bigr|=0\qquad \text{and}\qquad
	\sum_{k \in \cJ} \indic_{Q_k} (x)
	\leq
	\kappa
	\quad \text{for all}\quad x \in\RR^d.
\ee
In other words, such a family of subsets gives an essential covering of $\RR^d$ with $\kappa$ being an upper bound for the number of overlaps between the $Q_k$.

In view of the strong localization of eigenfunctions induced by the quadratic potential of the harmonic oscillator, one expects that
most of the mass of some $f \in \cE_N$ will be concentrated in a ball centered at the origin.
However, the radius of the ball will depend on the degree $N$.
This is spelled out in the following two lemmas.
The first is taken from \cite{BeauchardJPS-21} and formulates the exponential decay of Hermite functions in terms of a weighted $L^2$-estimate.

\begin{lemma}[{\cite[Proposition~3.3]{BeauchardJPS-21}}]\label{lem:decay}
	For all $f\in\mathcal{E}_N$ we have
	\bes
		\norm{\euler^{|\cdot|^2/64d}f}_{L^2(\RR^d)}^2
		\leq
		2^{2(d+1)+N}\norm{f}_{L^2(\RR^d)}^2
		.
	\ees
\end{lemma}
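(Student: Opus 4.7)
The plan is to Taylor expand the exponential in the claim and to reduce the estimate to uniform bounds on the moments
\bes
	M_{2k}(f) := \int_{\RR^d}|x|^{2k}|f(x)|^2\Diff x
\ees
for $f\in\cE_N$; explicitly,
\bes
	\norm{\euler^{|\cdot|^2/(64d)}f}_{L^2(\RR^d)}^2
	= \sum_{k=0}^\infty \frac{M_{2k}(f)}{k!\,(32d)^k}.
\ees

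For the moment bounds I would use the spectral structure of the harmonic oscillator $H=-\Delta + |x|^2$. Since $H$ has eigenvalues $2|\alpha|+d$ on the Hermite basis $\Phi_\alpha$, one has $\langle Hf, f\rangle \leq (2N+d)\norm{f}_{L^2(\RR^d)}^2$ for $f\in\cE_N$, and combined with the operator inequality $x_j^2 \leq |x|^2 \leq H$ this yields $\norm{x_j f}_{L^2(\RR^d)}^2 \leq (2N+d)\norm{f}_{L^2(\RR^d)}^2$. At the same time, expressing $x_j = (a_j + a_j^*)/\sqrt{2}$ via the ladder operators $a_j^* = (x_j-\partial_j)/\sqrt{2}$ and using their standard raising/lowering action on the Hermite basis shows that $x_j$ maps $\cE_N$ into $\cE_{N+1}$. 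Iterating these two facts along the coordinates of $\alpha$---each application of some $x_j$ raises the current degree by one and contributes a factor of $\sqrt{2(N+i)+d}$---produces, for every multi-index $\alpha$ with $|\alpha|=k$, the bound
\bes
	\norm{x^\alpha f}_{L^2(\RR^d)}^2 \leq \prod_{i=0}^{k-1}\bigl(2(N+i)+d\bigr)\,\norm{f}_{L^2(\RR^d)}^2 \leq \bigl(2(N+k)+d\bigr)^k\norm{f}_{L^2(\RR^d)}^2.
\ees
The multinomial expansion $|x|^{2k}=\sum_{|\alpha|=k}(k!/\alpha!)x^{2\alpha}$ together with the identity $\sum_{|\alpha|=k}k!/\alpha!=d^k$ then yields
\bes
	M_{2k}(f)\leq d^k\bigl(2(N+k)+d\bigr)^k\norm{f}_{L^2(\RR^d)}^2.
\ees

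Feeding this back into the Taylor series, the factor $d^k$ from the multinomial expansion is absorbed precisely by the $d^k$ in the denominator $(32d)^k$; this is why the scale $1/(64d)$ in the exponent has to depend on $d$ in this specific way. What remains is the scalar series $\sum_k \bigl((2(N+k)+d)/32\bigr)^k/k!$, which I would split at $k=N$: for $k\leq N$, bounding $2(N+k)+d\leq 4N+d$ yields a partial sum controlled by $\euler^{cN}$ for a small constant $c$ via $\sum_k x^k/k!=\euler^x$; for $k>N$, bounding $2(N+k)+d\leq 4k+d$ together with Stirling's $k!\geq(k/\euler)^k$ produces a convergent geometric tail of base $\sim\euler/4<1$. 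The main obstacle is then matching the specific prefactor $2^{2(d+1)+N}$: all dimensional terms have to be absorbed into the constant $2^{2(d+1)}$ and the $N$-growth from the first block (essentially $\euler^{N/8}$) must be verified below $2^N$, which holds since $1/8<\ln 2$.
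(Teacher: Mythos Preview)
The paper does not prove this lemma; it is quoted from \cite[Proposition~3.3]{BeauchardJPS-21} without argument. So there is no in-paper proof to compare against, and your attempt has to be assessed on its own.

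Your strategy is sound: Taylor-expanding $\euler^{|x|^2/(32d)}$, controlling the moments $M_{2k}(f)$ via the ladder-operator structure together with the operator inequality $|x|^2 \leq H$, and then summing. The iterated moment estimate $\norm{x^\alpha f}_{L^2}^2 \leq \prod_{i=0}^{k-1}\bigl(2(N+i)+d\bigr)\norm{f}_{L^2}^2$ and the multinomial step are correct, and the cancellation of $d^k$ against $(32d)^k$ is indeed the reason for the scale $1/(64d)$ in the exponent.

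There is, however, a small but genuine gap in your tail analysis. For $k > N$ you bound $2(N+k)+d \leq 4k+d$ and then assert, via $k! \geq (k/\euler)^k$, a geometric tail of base $\sim \euler/4$. The actual ratio is $\euler(4+d/k)/32$; even ignoring $d/k$ this is $\euler/8$ (not $\euler/4$), and more importantly the term $d/k$ is \emph{not} negligible when $d$ is large and $k$ is only just above $N$ --- for instance $N=1$, $d=100$, $k=2$ gives a ratio far above $1$. Two clean repairs: either split at $k_0 = N+d$ instead of $k=N$, so that $d/k < 1$ in the tail and the base is at most $5\euler/32 < 1$, while the finite block contributes at most $\euler^{(4N+3d)/32}$; or keep the split at $k=N$ but first use $(4k+d)^k \leq (4k)^k(1+d/(4k))^k \leq (4k)^k\euler^{d/4}$, pulling out a harmless factor $\euler^{d/4} \leq 4^d$ and leaving a true geometric tail with base $\euler/8$. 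With either fix the final bookkeeping to reach $2^{2(d+1)+N}$ goes through as you outline.
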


With this at hand it is possible to spell out a concentration statement, making the above discussed intuition precise.
The corresponding result is similar to one in \cite{BeauchardJPS-21}.

\begin{lemma}[{cf.~\cite[Lemma~3.2]{BeauchardJPS-21}}]\label{lem:choice_Jc}
	Let $N \in \NN$. Then, with the constant $C = 32d(1+\sqrt{\log\kappa})$
	the subset $\cJ_c := \{ k\in\cJ \colon Q_k\cap B(0,CN^{1/2})\neq\emptyset\}$ satisfies
	\[
		\sum_{k \in \cJ_c^\complement} \norm{f}_{L^2(Q_k)}^2
		\leq
		\frac{1}{4}\norm{f}_{L^2(\RR^d)}^2
		\quad\text{ for all}\quad
		f \in \cE_N
		.
	\]
\end{lemma}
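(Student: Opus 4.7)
The plan is to combine the weighted $L^2$-bound from Lemma~\ref{lem:decay} with the geometric restriction that, for $k\in\cJ_c^\complement$, the set $Q_k$ sits outside the ball $B(0,CN^{1/2})$, where the Gaussian weight $\euler^{\abs{x}^2/32d}$ takes very large values.

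More precisely, I first observe that for every $k\in\cJ_c^\complement$ the set $Q_k$ is disjoint from $B(0,CN^{1/2})$, so $Q_k\subset U:=\RR^d\setminus B(0,CN^{1/2})$. Using the overlap bound in \eqref{eq:covering-general-assumption}, I rewrite the sum over cells as a single integral:
\[
	\sum_{k\in\cJ_c^\complement}\norm{f}_{L^2(Q_k)}^2
	=\int_{\RR^d}\Bigl(\sum_{k\in\cJ_c^\complement}\indic_{Q_k}(x)\Bigr)\abs{f(x)}^2\Diff{x}
	\leq\kappa\int_{U}\abs{f(x)}^2\Diff{x}.
\]
Then I insert the Gaussian weight and pull out its infimum on $U$:
\[
	\int_U\abs{f(x)}^2\Diff{x}
	\leq\euler^{-C^2N/32d}\int_{\RR^d}\euler^{\abs{x}^2/32d}\abs{f(x)}^2\Diff{x}.
\]

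At this point Lemma~\ref{lem:decay} bounds the last integral by $2^{2(d+1)+N}\norm{f}_{L^2(\RR^d)}^2$, so it remains to check that the prefactor satisfies
\[
	\kappa\cdot 2^{2(d+1)+N}\cdot\euler^{-C^2N/32d}\leq\frac{1}{4}
\]
for the specified choice $C=32d(1+\sqrt{\log\kappa})$. Using $(1+\sqrt{\log\kappa})^2\geq 1+\log\kappa$ I split the decay factor as
\[
	\euler^{-C^2N/32d}
	=\euler^{-32d(1+\sqrt{\log\kappa})^2 N}
	\leq\kappa^{-32dN}\euler^{-32dN}.
\]
Then, since $\kappa\geq 1$, $d\geq 1$, $N\geq 1$, we have $\kappa\cdot\kappa^{-32dN}\leq 1$, and it reduces to verifying $2^{2(d+1)+N}\euler^{-32dN}\leq 1/4$, which holds with enormous room because $\euler^{32d}$ dwarfs~$2$.

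The only real obstacle is the bookkeeping: making sure the constant $C$ is picked large enough to absorb simultaneously the overlap count $\kappa$, the $d$-dependence $2^{2(d+1)}$, and the exponential growth $2^N$ coming from Lemma~\ref{lem:decay}, while keeping $C$ proportional to $\sqrt{\log\kappa}$ rather than to $\log\kappa$; the squared form of $(1+\sqrt{\log\kappa})^2$ is precisely what separates out a $\kappa^{-cN}$ factor that swallows the $\kappa$ in front.
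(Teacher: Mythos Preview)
Your proof is correct and follows essentially the same route as the paper's: both use the overlap bound to reduce to $\kappa\norm{f}_{L^2(\RR^d\setminus B(0,CN^{1/2}))}^2$, insert the Gaussian weight from Lemma~\ref{lem:decay}, and check that $\kappa\,2^{2(d+1)+N}\euler^{-C^2N/32d}\leq 1/4$. The only difference is that you spell out the verification of this last inequality via $(1+\sqrt{\log\kappa})^2\geq 1+\log\kappa$, whereas the paper simply asserts it is ``easy to see''.
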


\begin{proof}
	For $f \in \cE_N$ and $s > 0$, we have by Lemma~\ref{lem:decay} that
	\eqs{
		\norm{f}_{L^2(\RR^d\setminus B(0,s))}^2
		& =
		\norm{\euler^{-|\cdot|^2/64d}\euler^{|\cdot|^2/64d}f}_{L^2(\RR^d\setminus B(0,s))}^2\\
		& \leq
		\euler^{-s^2/32d}2^{2(d+1)+N}\norm{f}_{L^2(\RR^d)}^2
		.
	}
	From this we easily see that
	\[
		\norm{f}_{L^2(\RR^d\setminus B(0,s))}^2
		\leq
		\frac{1}{4\kappa}\norm{f}_{L^2(\RR^d)}^2\quad\text{if}\quad  s \geq CN^{1/2}
		.
	\]
	Moreover, if $k \in \cJ_c^\complement$, then $Q_k\cap B(0,CN^{1/2})=\emptyset$. Hence,
	\[
		\sum_{k \in \cJ_c^\complement} \norm{f}_{L^2(Q_k)}^2
		\leq
		\kappa \norm{f}_{L^2(\RR^d\setminus B(0,CN^{1/2}))}^2
		\leq
		\frac{1}{4}\norm{f}_{L^2(\RR^d)}^2
		.\qedhere
	\]
\end{proof}

The above then motivates the following general hypothesis on the covering.

\begin{hypothesis}{$(\mathrm{H_N})$}\label{hypothesis}
	Let $(Q_k)_{k \in \cJ}$ be finite or countably infinite giving an essential covering of $\RR^d$
	with overlap at most $\kappa$ as in \eqref{eq:covering-general-assumption}.
	For fixed $N\in\NN$, let
	\[
		\cJ_c = \cJ_c(N) = \{ k \in \cJ \colon Q_k \cap B(0,CN^{1/2}) \neq \emptyset\}
	\]
	with $C = 32d(1 + \sqrt{\log\kappa})$. For each $k\in\cJ_c$, we suppose that
	\begin{enumerate}[(i)]
		\item
		$Q_k$ is non-empty, convex, open, and contained in a hyperrectangle with sides of length
		$l_k = (l_k^{(1)},\dots,l_k^{(d)}) \in (0,\infty)^d$ parallel to the coordinate axes;

		\item
		there is a linear bijection $\Psi_k \colon \RR^d \to \RR^d$ with
		\[
			\frac{\abs{\Psi_k(Q_k)}}{(\diam \Psi_k(Q_k))^d}
			\geq
			\eta
		\]
		for some $\eta > 0$ independent of $k\in\cJ_c$;

		\item
		we have $\norm{l_k}_1 := l_k^{(1)} + \dots + l_k^{(d)} \leq D N^{(1-\eps)/2}$ for some $\eps \in (0,1]$ and $D > 0$
		independent of $k\in\cJ_c$.
	\end{enumerate}
\end{hypothesis}

Our general result now reads as follows. Its proof is postponed to the end of Section~\ref{sec:proof} below.

\begin{theorem}\label{thm:gen}
	With fixed $N \in \NN$ assume Hypothesis~\ref{hypothesis}.
	Moreover, let $S \subset \RR^d$ be measurable satisfying
	\be\label{eq:gen_assumption}
		\frac{\abs{S \cap Q_k}}{\abs{Q_k}}
		\geq
		\gamma^{N^{\alpha/2}}
		\quad\text{ for all }\quad
		k \in \cJ_c
	\ee
	with some fixed $\alpha \geq 0$ and $\gamma \in (0,1)$.

	Then, every $f \in \cE_N$ satisfies
	\be\label{eq:gen}
		\norm{f}_{L^2(S)}^2
		\geq
		\frac{3}{\kappa} \Bigl( \frac{\eta\gamma}{24d\tau_d} \Bigr)^{7\bigl( 800\euler\sqrt{d} D(D+1) + \log(4\kappa^{1/2})\bigr) N^{1-(\eps-\alpha)/2}} \norm{f}_{L^2(\RR^d)}^2
		,
	\ee
	where $\tau_d$ denotes the Lebesgue measure of the Euclidean unit ball in $\RR^d$.
\end{theorem}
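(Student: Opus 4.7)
The plan is a local-to-global scheme. I would first use the essential covering property \eqref{eq:covering-general-assumption} together with Lemma~\ref{lem:choice_Jc} to reduce the global $L^2$-mass of $f \in \cE_N$ to contributions from the central indices $\cJ_c$. Since $\sum_{k\in\cJ}\indic_{Q_k}\geq 1$ a.e.,
\bes
	\norm{f}_{L^2(\RR^d)}^2
	\leq
	\sum_{k\in\cJ_c}\norm{f}_{L^2(Q_k)}^2 + \sum_{k\in\cJ_c^\complement}\norm{f}_{L^2(Q_k)}^2,
\ees
and Lemma~\ref{lem:choice_Jc} bounds the second sum by $\tfrac14\norm{f}_{L^2(\RR^d)}^2$. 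Rearranging yields $\tfrac34\norm{f}_{L^2(\RR^d)}^2\leq\sum_{k\in\cJ_c}\norm{f}_{L^2(Q_k)}^2$, which already accounts for the leading factor $3$ in \eqref{eq:gen} and restricts attention to $k\in\cJ_c$.

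The core step is a local uncertainty principle on each $Q_k$ with $k\in\cJ_c$, which I expect Section~\ref{sec:local-estimate} to supply in a form
\bes
	\norm{f}_{L^2(Q_k)}^2
	\leq
	\Bigl(\frac{24d\tau_d}{\eta\gamma^{N^{\alpha/2}}}\Bigr)^{A(\|l_k\|_1\sqrt{N} + B)}
	\norm{f}_{L^2(S\cap Q_k)}^2
\ees
with numerical constants $A, B$. To motivate such an estimate I would write $f = P\,\euler^{-|\cdot|^2/2}$ with $\deg P\leq N$, transport $Q_k$ to a uniformly round convex body via the linear bijection $\Psi_k$ from hypothesis~(ii) (the $\eta^{-1}$ in the base records this step), apply a Remez/Kovrijkine-type polynomial inequality there using the density lower bound $\gamma^{N^{\alpha/2}}$ from \eqref{eq:gen_assumption}, and then transport back, noting that $k\in\cJ_c$ and hypothesis~(iii) force $Q_k\subset B(0,(C+D)N^{1/2})$, so that the Gaussian factor contributes only a bounded oscillation. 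The essential feature, for which hypotheses~(i)--(iii) are tailored, is that the exponent is controlled by the effective bandwidth $\|l_k\|_1\sqrt{N}$ rather than the naive polynomial degree $N$; this reflects that inside the classically allowed region a function in $\cE_N$ behaves like a band-limited function at scale $1/\sqrt{N}$.

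Reassembling by summing over $\cJ_c$ and applying $\sum_k\indic_{Q_k}\leq\kappa$ in reverse gives
\bes
	\tfrac34\norm{f}_{L^2(\RR^d)}^2
	\leq
	\kappa\Bigl(\frac{24d\tau_d}{\eta\gamma^{N^{\alpha/2}}}\Bigr)^{A(DN^{(1-\eps)/2}\sqrt{N} + B)}\norm{f}_{L^2(S)}^2,
\ees
since $\|l_k\|_1\leq DN^{(1-\eps)/2}$ produces the largest local constant. Taking logarithms, the coefficient of $\log(24d\tau_d/(\eta\gamma))$ becomes of order $D(D+1)N^{1-(\eps-\alpha)/2}$ (after folding $N^{\alpha/2}\log(1/\gamma)$ out of the base), and the outer factor $\kappa/4$ is absorbed as a $\log(4\kappa^{1/2})$ contribution to the exponent; rearranging then yields \eqref{eq:gen}. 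The main obstacle is Step~2 with the sharp $\sqrt{N}$ bandwidth: a naive Remez inequality for polynomials of degree $N$ would give an exponent linear in $N$ irrespective of $\|l_k\|_1$, destroying the sublinear bound $N^{1-(\eps-\alpha)/2}<N$ that is indispensable for the null-controllability application, and it is precisely hypotheses~(i)--(iii) together with the concentration supplied by Lemma~\ref{lem:decay} that make the correct local estimate available.
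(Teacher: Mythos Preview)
Your global reduction to $\cJ_c$ via Lemma~\ref{lem:choice_Jc} and your reassembly step are fine, but the heart of the argument---the local estimate you postulate in Step~2---is not available in the form you state, and this is a genuine gap, not a detail.

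You claim an inequality
\[
	\norm{f}_{L^2(Q_k)}^2
	\leq
	\Bigl(\frac{24d\tau_d}{\eta\gamma^{N^{\alpha/2}}}\Bigr)^{A(\|l_k\|_1\sqrt{N}+B)}
	\norm{f}_{L^2(S\cap Q_k)}^2
\]
valid for \emph{every} $k\in\cJ_c$, with an exponent independent of $f$. Section~\ref{sec:local-estimate} does not supply this. What Lemma~\ref{lem:localEstimate} gives has exponent $4\log M_k/\log 2+1$, where $M_k$ is the normalized analytic supremum \eqref{eq:def_Mk}, and $M_k$ depends on $f$ and can be arbitrarily large on cubes where $\norm{f}_{L^2(Q_k)}$ happens to be small. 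Your Remez heuristic does not rescue this: writing $f=P\euler^{-|\cdot|^2/2}$ and applying a polynomial Remez inequality gives an exponent proportional to $\deg P=N$, not to $\|l_k\|_1\sqrt{N}$, exactly as you note in your final paragraph; invoking ``hypotheses (i)--(iii) together with Lemma~\ref{lem:decay}'' does not explain how the exponent drops.

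The missing mechanism is Kovrijkine's good/bad dichotomy. The paper localizes the global Bernstein inequality (Lemma~\ref{lem:Bernstein}) to define \emph{good} cubes, on which a Taylor expansion yields the bound of Lemma~\ref{lem:Mk} for $M_k$---and this is where the exponent $\|l_k\|_1\sqrt{N}$ genuinely emerges, via the choice \eqref{eq:choice_delta} of the Bernstein parameter $\delta$. On bad cubes no such bound holds, but their total mass is at most $\tfrac12\norm{f}^2$ by \eqref{eq:bad-mass}. One then combines this with Lemma~\ref{lem:choice_Jc} through Lemma~\ref{lem:main_idea} to obtain $\norm{f}^2\leq 4\sum_{k\in\cJ_c\cap\cJ_g}\norm{f}_{L^2(Q_k)}^2$ (Corollary~\ref{cor:main_corollary}), after which the local estimate is applied only on $\cJ_c\cap\cJ_g$. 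The $\log(4\kappa^{1/2})$ term you try to manufacture from the outer factor $\kappa$ actually comes from the $\kappa^{1/2}$ appearing in the good-cube bound on $M_k$; the outer $\kappa$ survives as the prefactor $3/\kappa$ in \eqref{eq:gen}.
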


\begin{remark}
	(a) Examples for families $(Q_k)_{k\in\cJ}$ satisfying (i)--(iii) of Hypothesis~\ref{hypothesis} are discussed in Section~\ref{sec:coverings} below.

	(b) Let us emphasize that $\eta$ and $D$ in conditions (ii) and (iii), respectively, need to be uniform in $k \in \cJ_c$.

	(c) The constants $\eta$ and $D$ introduced in conditions (ii) and (iii) above are formally allowed to depend on $N$.
	However, in all applications we have in mind this will not be the case.
	Consequently, the constant in \eqref{eq:gen} then depends on $N$ only by the explicit power law $N^{1-\frac{\eps-\alpha}{2}}$.
	In this case the relevant exponent satisfies $1-\frac{\eps-\alpha}{2} < 1$ if and only if $\alpha < \eps$.
\end{remark}

%
%

\section{The local estimate and good covering sets}\label{sec:local-estimate}

On a bounded domain the following local estimate is sufficient to derive the type of uncertainty relation we are aiming at.
It goes back to Nazarov~\cite{Nazarov-94} and Kovrijkine~\cite{Kovrijkine-thesis,Kovrijkine-01}.
It is implicitly contained in several recent works such as~\cite[Section~5]{EgidiV-20},~\cite{WangWZZ-19},~\cite[Section~3.3.3]{BeauchardJPS-21},
\cite{MartinPS-22}, and~\cite[Lemma~3.5]{EgidiS-21}.
We rely here on the formulation in the last mentioned reference.

\begin{lemma}[{\cite[Lemma~3.5]{EgidiS-21}}]\label{lem:localEstimate}
	Let $N\in\NN$, $f \in \cE_N$, and let $Q \subset \RR^d$ be a non-empty bounded convex open set that is contained in a
	hyperrectangle with sides of length $l \in (0,\infty)^d$ parallel to coordinate axes.

	Then, for every measurable set $\omega \subset Q$ and every linear bijection $\Psi \colon \RR^d \to \RR^d$ we have
	\bes
		\norm{f}_{L^2(\omega)}^2
		\geq
		12 \Bigl( \frac{\abs{\Psi(\omega)}}{24d\tau_d(\diam\Psi(Q))^d} \Bigr)^{4\frac{\log M}{\log 2}+1}
			\norm{f}_{L^2(Q)}^2
	\ees
	with
	\[
		M := \frac{\sqrt{\abs{Q}}}{\norm{f}_{L^2(Q)}} \cdot \sup_{z \in Q + D_{4l}} \abs{f(z)},
	\]
	where $D_{4l}:= B(0,4l^{(1)})\times \ldots \times B(0,4l^{(d)})\subset\CC^d$ 
    denotes the anisotropic polydisc of radius $4l$ centered at the origin.
\end{lemma}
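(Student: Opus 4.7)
The plan is to prove this as a Remez--Kovrijkine type local inequality. The hypothesis $f \in \cE_N$ enters only in guaranteeing that $f$ extends to an entire function on $\CC^d$, so that $M$ is finite; beyond this, the argument depends only on holomorphy and on the Bernstein-type growth encoded by $M$.

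The core ingredient is a one-dimensional Remez--Kovrijkine lemma. Assume $h\colon \CC \to \CC$ is holomorphic on a disc around an interval $I \subset \RR$ with $\sup_{z \in I + D_{4|I|}}|h(z)| \leq M |I|^{-1/2}\norm{h}_{L^2(I)}$. Via Chebyshev's inequality, choose $x_0 \in I$ where $|h(x_0)|$ is comparable to the normalized $L^2$-average of $h$ on $I$, and apply Cauchy's estimates to the Taylor coefficients of $h$ at $x_0$. This forces $h$ to be well approximated on $I$ by a polynomial of degree $n \sim \log_2 M$. Combining the classical Remez inequality $\sup_I|p| \leq (4|I|/|E|)^n \sup_E|p|$ with an $L^\infty$--$L^2$ comparison yields, for every measurable $E \subset I$,
\eqs{
	\norm{h}_{L^2(E)}^2 \geq 12 \Bigl(\frac{|E|}{C|I|}\Bigr)^{4\log_2 M + 1} \norm{h}_{L^2(I)}^2
}
with explicit constants, matching the template of the stated bound.

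To promote this to $d$ dimensions, set $g := f \circ \Psi^{-1}$; Jacobians of $\Psi$ cancel between the two sides of the putative inequality. The set $\Psi(Q)$ is convex and contained in a cube of side $\diam\Psi(Q)$, so slice it by fibers in a single coordinate direction. A Fubini plus good-fiber argument selects a collection of fibers on which $|\Psi(\omega) \cap \text{fiber}|$ is not too small relative to $|\Psi(Q) \cap \text{fiber}|$ and which carry a comparable portion of $\norm{g}_{L^2(\Psi(Q))}^2$. The pre-image of each selected fiber under $\Psi^{-1}$ is a straight segment through $Q$ on which $f$ is a holomorphic function of one complex variable, and the polydisc $Q + D_{4l}$ contains enough of a complex neighborhood of this segment for the hypothesis of the 1D lemma to transfer from $f$ on $Q + D_{4l}$ to the restriction of $f$ along the segment. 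Integrating the fiberwise 1D bound produces the geometric factor $|\Psi(\omega)|/(24 d \tau_d (\diam\Psi(Q))^d)$ raised to the exponent $4\log_2 M + 1$, with the prefactor $12$ as claimed.

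The main obstacle I expect is coherent bookkeeping of the analytic hypothesis across the transformation $\Psi$. The polydisc $Q + D_{4l}$ is defined in the original coordinate system, while the geometric factor $|\Psi(\omega)|/(\diam\Psi(Q))^d$ appears naturally only after transformation. The clean way forward is to use $\Psi$ purely as a bookkeeping device for the geometric counting and to perform the 1D analytic estimate on the pre-image segments inside $Q$, where the original polydisc bound applies directly. This separation of the analytic ingredient from the geometric part is exactly why, in the statement of the lemma, $M$ depends only on $f$ and on $Q + D_{4l}$ (and not on $\Psi$), and it is what enables the stated flexibility in the choice of the linear bijection $\Psi$.
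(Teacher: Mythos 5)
You should first note that the paper does not prove this lemma at all: it is quoted verbatim from \cite[Lemma~3.5]{EgidiS-21}, whose proof follows Kovrijkine's scheme. Your one-dimensional ingredient (Chebyshev point, Cauchy estimates, approximation by a polynomial of degree $\sim\log_2 M$, Remez inequality) is the standard and correct building block. The genuine gap is in your $d$-dimensional reduction. You propose to slice $\Psi(Q)$ by parallel fibers in one coordinate direction and to ``select a collection of fibers on which $|\Psi(\omega)\cap\text{fiber}|$ is not too small relative to $|\Psi(Q)\cap\text{fiber}|$ and which carry a comparable portion of $\norm{g}_{L^2(\Psi(Q))}^2$''. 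Such fibers need not exist: take $\omega$ to be a small ball in one corner of $Q$ while $f$ is (essentially) concentrated near the opposite corner. For \emph{any} direction, the fibers meeting $\omega$ form a thin tube whose share of $\norm{f}_{L^2(Q)}^2$ can be arbitrarily small, so no fiber is simultaneously good for the density of $\omega$ and for the mass of $f$; integrating fiberwise bounds then cannot recover a fixed fraction of $\norm{f}_{L^2(Q)}^2$. A second, related defect: the 1D lemma needs an analytic bound normalized \emph{on the fiber} (by $\norm{f}_{L^2(J)}$ or by a point value on $J$), whereas the quantity $M$ in the statement is normalized by $\norm{f}_{L^2(Q)}/\sqrt{\abs{Q}}$; on fibers of small mass the effective $M_J$ blows up and ruins the exponent, and arranging fibers that are good in this third sense as well is again impossible in general. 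Note that the lemma is nevertheless true in the corner-ball example --- which shows the mechanism of proof must not try to capture the $L^2$-mass of $f$ fiber by fiber through $\omega$.

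The actual argument avoids this entirely and is worth internalizing. One fixes a single good point $x_0\in Q$ with $\abs{f(x_0)}^2\geq \tfrac12\norm{f}_{L^2(Q)}^2/\abs{Q}$, so that only one normalization point is needed and the global $M$ controls the analytic input (this is also why $M$ does not depend on $\Psi$). One then proves a \emph{sup-norm} Remez-type inequality on $Q$ valid for every measurable $A\subset Q$: writing $\abs{\Psi(A)}$ in polar coordinates around $\Psi(x_0)$ gives a direction $\theta$ whose radial slice of $\Psi(A)$ has linear density at least $\abs{\Psi(A)}/(d\tau_d(\diam\Psi(Q))^d)$; since one-dimensional density ratios along a line are invariant under the linear map $\Psi$, the 1D lemma applied on the corresponding segment of $Q$ through $x_0$ (here the polydisc $Q+D_{4l}$ supplies the complex neighborhood) yields a point of $A$ where $\abs{f}$ is at least $(\abs{\Psi(A)}/(Cd\tau_d(\diam\Psi(Q))^d))^{\log M/\log 2}\abs{f(x_0)}$. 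Finally, this sup-type bound is converted into the $L^2(\omega)$ bound by applying it to the sublevel set $A=\{x\in\omega:\abs{f(x)}\leq\lambda\}$ with a suitable $\lambda$: if $\abs{A}\geq\abs{\omega}/2$ one gets a contradiction, hence $\abs{f}>\lambda$ on at least half of $\omega$ and $\norm{f}_{L^2(\omega)}^2\geq\tfrac12\abs{\omega}\lambda^2$, which produces the exponent $4\log M/\log 2+1$ after using $\abs{\omega}/\abs{Q}=\abs{\Psi(\omega)}/\abs{\Psi(Q)}$ and $\abs{\Psi(Q)}\leq d\tau_d(\diam\Psi(Q))^d$. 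Your proposal is missing exactly this conversion step (single good point, one-segment sup-norm estimate for \emph{arbitrary} measurable subsets, sublevel-set trick); without it, the Fubini-over-fibers plan fails on elementary configurations of $\omega$ and $f$.
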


\begin{remark}\label{rem:boundEta}
	It is worth to note that the quantity $M$ in the above lemma automatically satisfies $M \geq 1$.
	Moreover, the bijection $\Psi$ can be chosen to optimize the right-hand side of
	\be\label{eq:distortion}
		\frac{\abs{\Psi(\omega)}}{(\diam\Psi(Q))^d}
		=
		\frac{\abs{\omega}}{\abs{Q}} \cdot \frac{\abs{\Psi(Q)}}{(\diam\Psi(Q))^d}
		,
	\ee
	cf.~\cite[Remark~3.6]{EgidiS-21} and also Section~\ref{sec:coverings} below.

	The proof of \cite[Lemma~3.5]{EgidiS-21} also shows that
	\[
		\abs{\Psi(Q)}
		\leq
		d\tau_d (\diam\Psi(Q))^d
		.
	\]
	In particular, the parameter $\eta$ in condition~(ii) of the covering always satisfies the upper bound $\eta \leq d\tau_d$.
\end{remark}

Throughout the remainder of this section, for fixed $N\in\NN$ we assume Hypothesis~\ref{hypothesis}.
Given a non-zero $f \in \cE_N$, let
\be\label{eq:def_Mk}
	M_k := \frac{\sqrt{\abs{Q_k}}}{\norm{f}_{L^2(Q_k)}} \cdot \sup_{z \in Q_k + D_{4l_k}} \abs{f(z)}
\ee
denote the normalized supremum from the local estimate in Lemma~\ref{lem:localEstimate} corresponding to $Q_k$.
We do not know how to guarantee an upper bound on $M_k$ for all $k$, but for `sufficiently many' $k$. In order to make this precise, we first recall the
Bernstein-type inequalities for functions in $\cE_N$ first established in \cite[Proposition~4.3\,(ii)]{BeauchardJPS-21} and
later reproduced in a slightly different form in \cite[Proposition~B.1]{EgidiS-21}.

\begin{lemma}[{\cite[Proposition~B.1]{EgidiS-21}}]\label{lem:Bernstein}
	Given $\delta > 0$, every function $f \in \cE_N$ satisfies
	\[
		\sum_{\abs{\alpha}=m} \frac{1}{\alpha!}\norm{\partial^\alpha f}_{L^2(\RR^d)}^2
		\leq
		\frac{C_B(m,N)}{m!} \norm{f}_{L^2(\RR^d)}^2
		\quad\text{ for all }\
		m \in \NN_0
	\]
	with
	\[
		C_B(m,N)
		=
		(2\delta)^{2m} \euler^{\euler/\delta^2} (m!)^2 \euler^{2\sqrt{2N+d}/\delta}
		.
	\]
\end{lemma}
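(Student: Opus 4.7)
The plan is to exploit that every $f\in\cE_N$ extends to an entire function on $\CC^d$ whose growth on a complex tube $\RR^d + D_r$ is tightly controlled by $\norm{f}_{L^2(\RR^d)}$ and the top spectral level $2N+d$. This Paley--Wiener-type feature of the harmonic oscillator's spectral subspaces promotes bounded $L^2$ data to Gevrey-$1/2$ regularity, and the free parameter $\delta$ in the statement precisely reflects the freedom to balance the derivative order $m$ against $\sqrt{2N+d}$. Concretely, expanding $f=\sum_{\abs{\beta}\leq N}c_\beta\Phi_\beta$ and applying Mehler's formula at an off-diagonal complex argument (or, equivalently, the Bargmann transform), one derives a tube estimate of the form
\bes
	\sup_{w\in\CC^d,\ \norm{w}_\infty\leq r}\abs{f(x+w)}^2
	\leq
	C(r,N)\,\euler^{-c\abs{x}^2}\norm{f}_{L^2(\RR^d)}^2
\ees
with $C(r,N)$ of shape $\euler^{c_1 r^2}\euler^{c_2 r\sqrt{2N+d}}$ for suitable absolute constants $c,c_1,c_2>0$.

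Next, I would invoke the multidimensional Cauchy integral formula on the polydisc of radius $r$ to obtain $\abs{\partial^\alpha f(x)}\leq \alpha!\,r^{-\abs{\alpha}}\sup_{\norm{w}_\infty\leq r}\abs{f(x+w)}$. Squaring, integrating in $x$, substituting the tube estimate, and summing over $\abs{\alpha}=m$ with weight $1/\alpha!$ yields
\bes
	\sum_{\abs{\alpha}=m}\frac{1}{\alpha!}\norm{\partial^\alpha f}_{L^2}^2
	\leq
	m!\,r^{-2m}\,C(r,N)\norm{f}_{L^2}^2
\ees
after absorbing the combinatorial factor via $\sum_{\abs{\alpha}=m}\alpha!\leq m!\,d^m$. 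Setting $r=1/(2\delta)$ converts $r^{-2m}$ into $(2\delta)^{2m}$ and turns the exponentials in $C(r,N)$ into $\euler^{c_1/(4\delta^2)}\,\euler^{c_2\sqrt{2N+d}/(2\delta)}$; the extra factor of $m!$ (so that the claimed bound carries $(m!)^2$) is then produced by separating the polynomial dependence on $N$ from the Paley--Wiener growth via the elementary inequality $a^m\leq m!\,\euler^a$, applied with $a\sim\sqrt{2N+d}/\delta$.

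The main obstacle is the sharp $\sqrt{2N+d}$ exponent in the tube estimate: a direct spectral bound $\norm{Hf}\leq(2N+d)\norm{f}$ for $H=-\Delta+\abs{x}^2$ yields only the coarser $2N+d$, which would give Gevrey-$1$ rather than Gevrey-$1/2$ regularity and destroy the effect of the free parameter $\delta$. The $1/2$-exponent is a genuine feature of the harmonic oscillator's eigenfunctions, rooted in Mehler's identity; the closed-form Gaussian kernel $\sum_\beta \rho^{\abs{\beta}}\Phi_\beta(x)\overline{\Phi_\beta(y)}$ has decay in $\abs{x-y}$ that can be tuned via the parameter $\rho$ so as to absorb the $\abs{\beta}$-dependence precisely. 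Once this sharp tube estimate is in hand, the remaining manipulations (Cauchy's formula, the multinomial bookkeeping, the elementary inequality $a^m\leq m!\,\euler^a$) are essentially routine.
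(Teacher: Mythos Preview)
This lemma is not proved in the present paper; it is quoted verbatim from \cite[Proposition~B.1]{EgidiS-21}, which in turn recasts \cite[Proposition~4.3\,(ii)]{BeauchardJPS-21}. There is therefore no in-paper argument to compare your sketch against.

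That said, your strategy---a complex tube estimate for $f\in\cE_N$ obtained via Mehler's kernel (or the Bargmann transform), followed by the multidimensional Cauchy integral formula on a polydisc of radius $r=1/(2\delta)$---is precisely the route taken in \cite{BeauchardJPS-21}, so in spirit your outline is correct. Two pieces of bookkeeping in your write-up are off, however. First, the combinatorial sum $\sum_{\abs{\alpha}=m}\alpha!$ leaves a residual factor (your $d^m$, more sharply $\binom{m+d-1}{d-1}$) that cannot be ``absorbed'' into an $m$-independent $C(r,N)$; it must be handled by rescaling $r$ (equivalently $\delta$), which then shifts the numerical constants in the exponentials and explains why your unspecified $c_1,c_2$ will not reproduce the exact values $\euler^{\euler/\delta^2}$ and $\euler^{2\sqrt{2N+d}/\delta}$ without further care. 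Second, your closing remark about manufacturing ``the extra factor of $m!$'' via $a^m\leq m!\,\euler^a$ is misplaced: the target bound $C_B(m,N)/m!$ carries exactly \emph{one} factor of $m!$, and your Cauchy-formula step already produces it through $\sum_{\abs{\alpha}=m}\alpha!\leq m!\cdot(\text{lower order})$. No second $m!$ is required, and invoking that inequality would overshoot by a full factorial. With these two corrections the sketch does yield an inequality of the stated shape.
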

The parameter $\delta > 0$ will be chosen appropriately depending on $N$ later on, see~\eqref{eq:choice_delta} below.

We use the by now well-established approach of localizing the Bernstein-type inequality on so-called good $Q_k$, which is a
modification of ideas introduced by Kovrijkine~\cite{Kovrijkine-thesis,Kovrijkine-01} and used in many works thereafter.
We rely on the form presented in~\cite[Section~3.3]{EgidiS-21}:

We say that $Q_k$ for $k \in \cJ$ is \emph{good} with respect to $f \in \cE_N$ if
\[
	\sum_{\abs{\alpha} = m} \frac{1}{\alpha!}\norm{\partial^\alpha f}_{L^2(Q_k)}^2
	\leq
	2^{m+1} \kappa \frac{C_B(m,N)}{m!} \norm{f}_{L^2(Q_k)}^2
	\quad\text{ for all }\quad
	m \in \NN,
\]
and we call $Q_k$ \emph{bad} otherwise.
It is then not difficult to show that
\be\label{eq:bad-mass}
	\sum_{k \colon Q_k\text{ bad}} \norm{f}_{L^2(Q_k)}^2
	\leq
	\frac{1}{2}\norm{f}_{L^2(\RR^d)}^2
	,
\ee
see \cite[Section~3.3]{EgidiS-21}; in particular, good $Q_k$ exist.
Moreover, for each such bounded $Q_k$ there is a point $x_{k} \in Q_k$ with
\[
	\abs{\partial^\alpha f(x_k)}
	\leq
	2^{m+1} (\kappa C_B(m,N))^{1/2} \frac{\norm{f}_{L^2(Q_k)}}{\sqrt{\abs{Q_k}}}
\]
for all $m \in \NN_0$ and all $\alpha \in \NN_0^d$ with $\abs{\alpha} = m$, see \cite[Eq.~(3.9)]{EgidiS-21}.
Using Taylor expansion around $x_k$, we now extract from the proof of \cite[Proposition~3.1]{EgidiS-21} the following result.

\begin{lemma}\label{lem:Mk}
	Let $Q_k$ be good.
	Then, the quantity $M_k$ in~\eqref{eq:def_Mk} satisfies
	\bes
		M_k
		\leq
		2\kappa^{1/2}\sum_{m \in \NN_0} C_B(m,N)^{1/2} \frac{(10 \norm{l_k}_1)^m}{m!}
		,
	\ees
	where $\norm{l_k}_1 = l_k^{(1)} + \dots + l_k^{(d)}$.
\end{lemma}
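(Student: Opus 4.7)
The plan is to perform a Taylor expansion of $f$ around the distinguished point $x_k \in Q_k$ supplied by the good property, inserting the pointwise derivative bound
\[
	\abs{\partial^\alpha f(x_k)}
	\leq
	2^{m+1}(\kappa C_B(m,N))^{1/2} \frac{\norm{f}_{L^2(Q_k)}}{\sqrt{\abs{Q_k}}},
	\quad \abs{\alpha}=m,
\]
recalled from the ambient setup of \cite[Eq.~(3.9)]{EgidiS-21}. Since $f \in \cE_N$ is entire, its Taylor series around $x_k$ converges absolutely on all of $\CC^d$, so this expansion is legitimate pointwise on $Q_k + D_{4l_k}$.

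Next I would control the factors $\abs{z_j - (x_k)_j}$ produced by the expansion. Because $Q_k$ fits inside a hyperrectangle of sidelengths $l_k = (l_k^{(1)},\dots,l_k^{(d)})$ parallel to the coordinate axes, any two points of $Q_k$ differ by at most $l_k^{(j)}$ in the $j$-th coordinate; combined with the polydisc radius $4l_k^{(j)}$, this yields the componentwise bound $\abs{z_j - (x_k)_j} \leq 5 l_k^{(j)}$ for every $z \in Q_k + D_{4l_k}$. Inserting this and the derivative estimate into the Taylor expansion and then summing over multi-indices via the multinomial identity
\[
	\sum_{\abs{\alpha}=m} \frac{1}{\alpha!} \prod_{j=1}^d y_j^{\alpha_j}
	=
	\frac{(y_1 + \dots + y_d)^m}{m!}
\]
collapses the inner sum at each order $m$ to $(5\norm{l_k}_1)^m/m!$. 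The $5^m$ absorbs the $2^{m+1}$ from the derivative bound to produce the advertised $10^m$, and dividing through by $\norm{f}_{L^2(Q_k)}/\sqrt{\abs{Q_k}}$ and taking the supremum over $z$ recovers the claimed estimate on $M_k$.

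I do not anticipate a serious obstacle: the argument is a routine bookkeeping of Taylor coefficients. The only moving parts are the legitimacy of the complex Taylor expansion (ensured by entirety of Hermite-function combinations) and the careful tracking of constants so that the prefactors $2^{m+1}$, $5^m$, and the denominator $m!$ assemble exactly into $10^m/m!$ as required.
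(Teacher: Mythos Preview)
Your proposal is correct and follows exactly the approach indicated in the paper, which simply extracts the argument from the proof of \cite[Proposition~3.1]{EgidiS-21} via Taylor expansion around the good point $x_k$. The bookkeeping of constants is right: the componentwise bound $\abs{z_j-(x_k)_j}\leq 5l_k^{(j)}$, the multinomial collapse to $(5\norm{l_k}_1)^m/m!$, and the combination $2^{m+1}\cdot 5^m = 2\cdot 10^m$ together yield precisely the stated estimate.
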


%
%

\section{Proof of Theorem~\ref{thm:gen}}\label{sec:proof}

Throughout this section we assume for some fixed $N$ Hypothesis~\ref{hypothesis}.
We need to find sufficiently many good elements $Q_k$ with $k\in\cJ_c$.
This is ensured by the following lemma and its corollary.

\begin{lemma}\label{lem:main_idea}
	Given $f \in L^2(\RR^d)$, let $\cJ_1, \cJ_2, \ldots \subset \cJ$ be such that
	\be\label{eq:main_idea_assumptions}
		\sum_{k \in \cJ_j^\complement} \norm{f}_{L^2(Q_k)}^2
		\leq
		\nu_j \norm{f}_{L^2(\RR^d)}^2
		\quad\text{ with }\quad
		\sigma:=\sum_{j \in \NN} \nu_j
		< 1
		.
	\ee
	Then,
	\[
		\norm{f}_{L^2(\RR^d)}^2
		\leq
		\frac{1}{1-\sigma}\,\sum_{k \in \bigcap_{j\in \NN} \cJ_j} \norm{f}_{L^2(Q_k)}^2
		.
	\]
\end{lemma}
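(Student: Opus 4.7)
The plan is to use the covering property \eqref{eq:covering-general-assumption} to bound $\norm{f}_{L^2(\RR^d)}^2$ from above by the sum over all $k \in \cJ$, then remove the ``bad'' indices from this sum at the cost of $\sigma \norm{f}_{L^2(\RR^d)}^2$.

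First, since $\RR^d \setminus \bigcup_k Q_k$ has measure zero, we have $\sum_{k \in \cJ} \indic_{Q_k}(x) \geq 1$ for almost every $x \in \RR^d$. Integrating $\abs{f}^2$ against this pointwise inequality yields
\be\label{eq:plan-step1}
	\norm{f}_{L^2(\RR^d)}^2
	\leq
	\sum_{k \in \cJ} \norm{f}_{L^2(Q_k)}^2
	.
\ee
Next, I would split the index set as $\cJ = \bigl(\bigcap_{j} \cJ_j\bigr) \cup \bigl(\bigcup_{j} \cJ_j^\complement\bigr)$ and apply a union-bound style estimate: by the assumption~\eqref{eq:main_idea_assumptions},
\bes
	\sum_{k \in \bigcup_{j} \cJ_j^\complement} \norm{f}_{L^2(Q_k)}^2
	\leq
	\sum_{j \in \NN} \sum_{k \in \cJ_j^\complement} \norm{f}_{L^2(Q_k)}^2
	\leq
	\Bigl(\sum_{j \in \NN} \nu_j \Bigr) \norm{f}_{L^2(\RR^d)}^2
	=
	\sigma \norm{f}_{L^2(\RR^d)}^2
	.
\ees
Combining this with \eqref{eq:plan-step1} gives
\bes
	\norm{f}_{L^2(\RR^d)}^2
	\leq
	\sum_{k \in \bigcap_{j} \cJ_j} \norm{f}_{L^2(Q_k)}^2
	+
	\sigma \norm{f}_{L^2(\RR^d)}^2
	,
\ees
and the claim follows by rearranging, using $\sigma < 1$.

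There is no real obstacle here: the lemma is a clean bookkeeping statement, and the only ingredients are the essential covering property and subadditivity. One should simply be careful that the bound \eqref{eq:plan-step1} uses only the covering (lower bound $\sum_k \indic_{Q_k} \geq 1$ a.e.) and not the overlap control by $\kappa$, which is not needed for this step; correspondingly, the constant in the conclusion is independent of $\kappa$.
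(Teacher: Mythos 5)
Your proof is correct and matches the paper's argument essentially step for step: the essential covering gives $\norm{f}^2 \leq \sum_{k} \norm{f}_{L^2(Q_k)}^2$, subadditivity plus the hypothesis bounds the sum over $\bigcup_j \cJ_j^\complement$ by $\sigma\norm{f}^2$, and rearranging over the disjoint split $\cJ = (\bigcap_j \cJ_j) \sqcup (\bigcup_j \cJ_j^\complement)$ gives the claim. Your closing observation that only the lower covering bound (and not the overlap constant $\kappa$) is used here is accurate and consistent with the paper, whose constant in this lemma is indeed independent of $\kappa$.
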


\begin{proof}
	Subadditivity and \eqref{eq:main_idea_assumptions} imply
	\[
		\sum_{k \in \bigcup_{j\in \NN} \cJ_j^\complement}  \norm{f}_{L^2(Q_k)}^2
		\leq
		\sum_{j\in \NN} \sum_{k \in \cJ_c^\complement} \norm{f}_{L^2(Q_k)}^2
		\leq
		\sum_{j\in \NN}  \nu_j \norm{f}_{L^2(\RR^d)}^2
		.
	\]
	Passing to complements gives
	\[
		\sum_{k  \in \bigcap_{j\in \NN} \cJ_j} \norm{f}_{L^2(Q_k)}^2
		=
		\sum_{k \in \cJ} \norm{f}_{L^2(Q_k)}^2 - \sum_{k \in \bigcup_{j\in \NN} \cJ_j^\complement} \norm{f}_{L^2(Q_k)}^2
		\geq
		(1-\sigma) \norm{f}_{L^2(\RR^d)}^2
		,
	\]
	which proves the claim.
\end{proof}

We apply a simple version of this lemma considering only two subsets of the index set $\cJ$, namely
\be\label{eq:def_Jg}
	\cJ_g=\{k\colon Q_k\text{ good}\} \quad \text{and}\quad \cJ_c = \{ k \colon Q_k\cap B(0,CN^{1/2})\neq\emptyset\}.
\ee
In view of Lemma~\ref{lem:choice_Jc} and inequality~\eqref{eq:bad-mass}, the hypotheses of Lemma~\ref{lem:main_idea} are satisfied.
This leads to the following corollary.

\begin{corollary}\label{cor:main_corollary}
	Given $f \in \cE_N$ and $\cJ_c, \cJ_g \subset \cJ$ as in \eqref{eq:def_Jg}, we have
	\[
		\norm{f}_{L^2(\RR^d)}^2
		\leq
		4\sum_{k \in \cJ_c \cap \cJ_g} \norm{f}_{L^2(Q_k)}^2
		.
	\]
	In particular, we have $\cJ_c \cap \cJ_g \neq \emptyset$, unless $f = 0$.
\end{corollary}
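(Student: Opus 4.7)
The plan is to obtain the corollary by a direct application of Lemma~\ref{lem:main_idea} with the two index sets $\cJ_c$ and $\cJ_g$ from~\eqref{eq:def_Jg}, using the quantitative mass bounds already at hand for their complements.

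First I would identify the constants $\nu_j$ required by Lemma~\ref{lem:main_idea}. Lemma~\ref{lem:choice_Jc} (applied under Hypothesis~\ref{hypothesis}, whose covering assumption~\eqref{eq:covering-general-assumption} is in force throughout the section) gives
\bes
    \sum_{k \in \cJ_c^\complement} \norm{f}_{L^2(Q_k)}^2 \leq \tfrac{1}{4}\norm{f}_{L^2(\RR^d)}^2,
\ees
so we may take $\nu_1 = 1/4$. Next, inequality~\eqref{eq:bad-mass}, which summarizes the Kovrijkine-type bad-set argument recalled just before Lemma~\ref{lem:Mk}, yields
\bes
    \sum_{k \in \cJ_g^\complement} \norm{f}_{L^2(Q_k)}^2 = \sum_{k \colon Q_k \text{ bad}} \norm{f}_{L^2(Q_k)}^2 \leq \tfrac{1}{2}\norm{f}_{L^2(\RR^d)}^2,
\ees
so $\nu_2 = 1/2$ works.

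Then I would simply sum: $\sigma = \nu_1 + \nu_2 = 1/4 + 1/2 = 3/4 < 1$, which verifies the hypothesis~\eqref{eq:main_idea_assumptions} of Lemma~\ref{lem:main_idea}. The conclusion of that lemma gives
\bes
    \norm{f}_{L^2(\RR^d)}^2 \leq \frac{1}{1-\sigma} \sum_{k \in \cJ_c \cap \cJ_g} \norm{f}_{L^2(Q_k)}^2 = 4 \sum_{k \in \cJ_c \cap \cJ_g} \norm{f}_{L^2(Q_k)}^2,
\ees
which is exactly the asserted bound.

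The ``in particular'' statement is then immediate: if the intersection $\cJ_c\cap\cJ_g$ were empty, the sum on the right would vanish, forcing $\norm{f}_{L^2(\RR^d)} = 0$, i.e.\ $f=0$. There is no real obstacle here beyond bookkeeping; the only subtlety worth double-checking is that Lemma~\ref{lem:choice_Jc} and~\eqref{eq:bad-mass} are both stated (or directly available) for the same covering $(Q_k)_{k\in\cJ}$ and the same $f\in\cE_N$, which they are, since both rely only on Hypothesis~\ref{hypothesis} and the decay/Bernstein machinery of Section~\ref{sec:local-estimate}.
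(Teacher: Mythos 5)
Your proposal is correct and coincides with the paper's argument: it applies Lemma~\ref{lem:main_idea} to the two index sets $\cJ_c$ and $\cJ_g$, with $\nu_1=1/4$ from Lemma~\ref{lem:choice_Jc} and $\nu_2=1/2$ from~\eqref{eq:bad-mass}, giving $\sigma=3/4$ and hence the factor $4$. Nothing to add.
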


We are now in position to give the proof of our main result.

\begin{proof}[Proof of Theorem~\ref{thm:gen}]
	In light of properties (i) and (ii) in Hypothesis~\ref{hypothesis}, the local estimate in Lemma~\ref{lem:localEstimate} and identity~\eqref{eq:distortion}
	yield
	\[
		\norm{f}_{L^2(Q_k\cap S)}^2
		\geq
		a_k\norm{f}_{L^2(Q_k)}^2
		\quad\text{ with}\quad
		a_k
		=
		12 \Bigl( \frac{\eta\abs{S \cap Q_k}}{24d\tau_d\abs{Q_k}} \Bigr)^{4\frac{\log M_k}{\log 2} + 1}
	\]
	for $k \in \cJ_c$, where $M_k$ is as in~\eqref{eq:def_Mk}.
	By Corollary~\ref{cor:main_corollary} we then have
	\be\label{eq:gen:min}
		\Bigl( \min_{k \in \cJ_c \cap \cJ_g} a_k \Bigr) \norm{f}_{L^2(\RR^d)}^2
		\leq
		4\sum_{k \in \cJ_c \cap \cJ_g} a_k \norm{f}_{L^2(Q_k)}^2
		\leq
		4\kappa \norm{f}_{L^2(S)}^2
		.
	\ee
	Using assumption~\eqref{eq:gen_assumption} on the set $S$, we have
	\be\label{eq:gen:ak}
		a_k
		\geq
		12\Bigl( \frac{\eta \gamma^{N^{\alpha/2}}}{24d\tau_d} \Bigr)^{4\frac{\log M_k}{\log 2}+1}
		\quad\text{ for all}\quad
		k \in \cJ_c
		.
	\ee
	Moreover, condition (iii) of Hypothesis~\ref{hypothesis} gives $\norm{l_k}_1 \leq DN^{(1-\eps)/2}$ for all $k \in \cJ_c$.
	Using Lemma~\ref{lem:Mk} and the definition of $C_B(m,N)$ from Lemma~\ref{lem:Bernstein}, we therefore get for
	all $k \in \cJ_c \cap \cJ_g$ that
	\eqs{
		M_k
		&\leq
		2\kappa^{1/2} \sum_{m \in \NN_0} C_B(m,N)^{1/2} \frac{(10 DN^{(1-\eps)/2})^m}{m!}\\
		&=
		2\kappa^{1/2} \euler^{\euler/(2\delta^2)} \euler^{\sqrt{2N+d}/\delta} \sum_{m \in \NN_0} (20\delta DN^{(1-\eps)/2})^m
		,
	}
	where $\delta > 0$ is still to be chosen. We do this as
	\be\label{eq:choice_delta}
		\delta
		=
		\bigl( 40DN^{(1-\eps)/2} \bigr)^{-1}
		,
	\ee
	so that
	\be\label{eq:change-for-scaling}
		\begin{split}
		M_k &\leq 4\kappa^{1/2}\exp(800\euler D^2 N^{1-\eps} + 40DN^{(1-\eps)/2}\sqrt{2N+d})\\
		&\leq 4\kappa^{1/2}\exp(800\euler\sqrt{d} D(D+1) N^{1-\eps/2})
		\end{split}
	\ee
	and, thus,
	\eqs{
		\log M_k
		&\leq
		\log(4\kappa^{1/2})+800\euler\sqrt{d} D(D+1) N^{1-\eps/2} \\
		&\leq
		\bigl( 800\euler\sqrt{d} D(D+1) + \log(4\kappa^{1/2}) \bigr) N^{1-\eps/2}
	}
	for all $k \in \cJ_c \cap \cJ_g$.
	Combining the latter with \eqref{eq:gen:ak}, we arrive at
	\[
		a_k
		\geq
		12\Bigl( \frac{\eta\gamma}{24d\tau_d} \Bigr)^{7\bigl( 800\euler\sqrt{d} D(D+1) + \log(4\kappa^{1/2})\bigr) N^{1-(\eps-\alpha)/2}}
	\text{ for all $k \in \cJ_c \cap \cJ_g$, }
	\]
	where we have taken into account the fact that $\eta / (24d\tau_d) \leq 1 / 24 < 1$ by
	Remark~\ref{rem:boundEta} and that $1+4/\log2 \leq 7$. In view of \eqref{eq:gen:min}, this proves the claim.
\end{proof}

\begin{remark}\label{rem:scaling}
	(a) It is worth to note that one may get a slightly sharper estimate in \eqref{eq:change-for-scaling} by
	\[
		M_k \leq 4\kappa^{1/2}\exp(800\euler D^2 N^{1-\eps}+40D\sqrt{d+2}N^{1-\eps/2}).
	\]
	This might be interesting in situations with small $D$, cf.~the discussion at the end of Subsection~\ref{ssec:main-cubes} below.
	
	(b) The only obstacle to extend the main result to an $L^p$-setting is a corresponding variant of the Bernstein inequality or a suitable replacement thereof.
\end{remark}

%
%

\section{Proof of Theorems~\ref{thm:main-result-fixed-cubes} and~\ref{thm:main-result-increasing-balls}}\label{sec:coverings}

In this section we discuss several examples satisfying Hypothesis~\ref{hypothesis} from Section~\ref{sec:core-result} and
thereby deduce Theorems~\ref{thm:main-result-fixed-cubes} and~\ref{thm:main-result-increasing-balls} from our general result Theorem~\ref{thm:gen}.
In what follows, $K\geq 1$ denotes a universal constant that can change from line to line.

\subsection{Proof of Theorem~\ref{thm:main-result-fixed-cubes}}\label{ssec:main-cubes}

Our first example addresses the situation of Theorem~\ref{thm:main-result-fixed-cubes} with $Q_k = \Lambda_\rho(k)=k + (-\rho/2 , \rho/2)^d$, $\rho > 0$.
Here, we clearly have $\cJ=(\rho\ZZ)^d$ and $\kappa = 1$, thus $C=32d$.
With $\Psi_k$ in condition (ii) being the identity, we have $\eta = 1 / d^{d/2}$.
Taking into account the asymptotic formula $\tau_d \sim (2\pi\euler/d)^{d/2} / \sqrt{d\pi}$,
we infer that $24d\tau_d/\eta \leq K^d$.
Moreover, it is easy to see that $l_k = (\rho,\dots,\rho)$ satisfies $\norm{l_k}_1 = d\rho = DN^{0}$ with $D := d\rho$. Hence, $(\Lambda_\rho(k))_{k\in\cJ}$ satisfies
Hypothesis~\ref{hypothesis} for every $N \in \NN$. Note here that both $D$ and $\eta$ are independent of $N$.

It is also not hard to verify that
\[
	\frac{|k|}{2}\leq\inf_{x\in \Lambda_\rho(k)}|x|\leq CN^{1/2}\quad\text{for all}\quad k\in\cJ_c\subset (\rho\ZZ)^d.
\]
Here the first inequality follows from the definition of $\Lambda_\rho(k)$ while the second follows from the definition of $\cJ_c$.
Finally, using these estimates, we calculate
\[
	\gamma^{1+\abs{k}^\beta}
	\geq
	\bigl( \gamma^{2^\beta} \bigr)^{1+(\abs{k}/2)^\beta}
	\geq
	\bigl( \gamma^{2^\beta} \bigr)^{1+{C}^\beta N^{\beta/2}}
	\geq
	\bigl(\gamma^{2(2C)^\beta}\bigr)^{N^{\beta/2}}
	.
\]
The claim in Theorem~\ref{thm:main-result-fixed-cubes} now follows from Theorem~\ref{thm:gen} with $\alpha = \beta$, $\eps = 1$, and
$\gamma$ replaced by $\gamma^{2(2C)^\beta}$.
The simple estimate
\[
	2\cdot (2C)^\beta\cdot 7 \bigl( 800\euler\sqrt{d} D(D+1) + \log(4)\bigr)
	\leq
	K d^{5/2+\beta}(1+\rho)^2
\]
then provides us with the particular constant in \eqref{eq:main-result-fixed-cubes}.\hfill\qedsymbol

\subsubsection*{Comparison of the harmonic oscillator and the pure Laplacian}

Here we discuss in what sense one may regard the harmonic oscillator as an interpolating model.
To this end, consider $H_t=-\Delta+t|x|^2$ for $t > 0$ and let $S\subset\RR^d$ be as in Theorem~\ref{thm:main-result-fixed-cubes}.
It is then easy to see that
\[
	H = H_1 = t^{-1/2}\cU^{-1}H_t\cU
\]
where $\cU\colon L^2(\RR^d)\to L^2(\RR^d)$ is the unitary transformation defined by
\[
	(\cU f)(x) = t^{d/8}f(t^{1/4}x),\quad x\in\RR^d.
\]
In particular,
\[
	\cU^{-1}g\in\Ran\indic_{(-\infty,t^{-1/2}\lambda]}(H)\quad\text{for all}\quad g\in\Ran\indic_{(-\infty,\lambda]}(H_t).
\]
Clearly, $\norm{g}_{L^2(\RR^d)} = \norm{\cU^{-1}g}_{L^2(\RR^d)}$ and $\norm{g}_{L^2(S)} = \norm{\cU^{-1}g}_{L^2(t^{1/4}S)}$.
Moreover, a simple calculation shows that $t^{1/4}S$ satisfies the geometric hypotheses from Theorem~\ref{thm:main-result-fixed-cubes} with $\rho$ replaced by $t^{1/4}\rho$.
Thus, using a slightly sharper estimate in the proof of Theorem~\ref{thm:gen}, see Remark~\ref{rem:scaling},
the proof of Theorem~\ref{thm:main-result-fixed-cubes} applied to $\cU^{-1}g$ and $t^{1/4}S$ yields
\[
	\norm{g}_{L^2(S)}^2
	\geq
	3\Bigl(\frac{\gamma}{K^d} \Bigr)^{Kd^\beta\bigl( 1 + d^2\rho^2 t^{1/2}+d^{3/2}\rho\lambda^{1/2}\bigr) t^{-\beta/2}\lambda^{\beta/2} }\norm{g}_{L^2(\RR^d)}^2.
\]
Here we see that the exponent on the right-hand side explodes as $t\to 0$ if $\beta > 0$.
On the other hand, if $\beta = 0$, then the limit as $t\to 0$ reproduces the known result \cite{Kovrijkine-01,Kovrijkine-thesis} for the Laplacian with thick sets.
This makes the intuition spelled out in the introduction that the harmonic oscillator may serve as an interpolating model more precise.

\subsection{Proof of Theorem~\ref{thm:main-result-increasing-balls}}\label{ssec:sectors-annuli}

In contrast to the result we have verified in the preceding subsection, the proof of Theorem~\ref{thm:main-result-increasing-balls}
starts with the construction of the family $(Q_k)_{k\in\cJ}$, as the family is not given in the formulation of the theorem.
To this end, we use the following version of the well-known Besicovitch covering theorem.

\begin{proposition}[Besicovitch]\label{prop:Besicovitch}
	If $A\subset\RR^d$ is a bounded set and $\cB$ is a family of closed balls such that each point in $A$ is the center of some ball in $\cB$,
	then there are at most countably many balls $(\overline{B}_k)\subset\cB$ such that
	\[
		\indic_A \leq \sum_k\indic_{\overline{B}_k}\leq K^d.
	\]
\end{proposition}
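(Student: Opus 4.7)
The plan is to carry out the classical greedy construction underlying Besicovitch's covering theorem. Since $A$ is bounded, I first reduce to the case of uniformly bounded radii: any ball in $\cB$ of radius exceeding $\diam A$ can be handled by a preliminary dyadic shell decomposition (or simply absorbed at the end, contributing only $O(1)$ overlap), which allows me to set $R_0 := \sup\{r(B) : B \in \cB\} < \infty$ without loss of generality.

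Next, I select balls greedily. Choose $\overline{B}_1 \in \cB$ with $r(\overline{B}_1) \geq R_0/2$. Given $\overline{B}_1,\ldots,\overline{B}_{k-1}$, if $A \setminus \bigcup_{j<k}\overline{B}_j \neq \emptyset$, let $R_k := \sup\{r(B) : B \in \cB,\ \mathrm{center}(B) \notin \bigcup_{j<k}\overline{B}_j\}$ and pick $\overline{B}_k \in \cB$ whose center lies outside $\bigcup_{j<k}\overline{B}_j$ and with $r(\overline{B}_k) \geq R_k/2$. The covering property $\indic_A \leq \sum_k \indic_{\overline{B}_k}$ then follows from the observation that the concentric half-radius balls $\tfrac{1}{2}\overline{B}_k$ are pairwise disjoint by the factor $2$ in the greedy choice; since they sit inside a bounded enlargement of $A$, their radii must tend to $0$, and any uncovered point $x \in A$ would contradict the definition of $R_k$.

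The heart of the argument is the overlap bound. Fix $x \in \RR^d$ and set $I(x) := \{k : x \in \overline{B}_k\}$, with $c_k, r_k$ denoting the center and radius of $\overline{B}_k$. For $i < j$ in $I(x)$, the greedy selection gives $|c_i - c_j| > r_i$ (since $c_j$ was admissible at step $j$) and $r_j \leq 2 r_i$ (since $c_i$ was still available at step $j$). Combined with $|c_i - x| \leq r_i$ and $|c_j - x| \leq r_j$, an elementary triangle-inequality computation forces the angle between the vectors $c_i - x$ and $c_j - x$ to be bounded below by some absolute constant $\theta_0 > 0$. A standard packing argument on the unit sphere $S^{d-1}$ then caps $|I(x)|$ by the maximal number of unit vectors in $\RR^d$ with pairwise angular separation at least $\theta_0$, which is $\leq K^d$ by volume comparison.

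I expect the main obstacle to be the angle estimate, which degenerates when $x$ is close to a center $c_i$. The classical remedy is to split $I(x)$ into a \emph{close} subfamily (those $i$ with $|c_i - x| \leq r_i/2$) and a \emph{far} subfamily (those with $r_i/2 < |c_i - x| \leq r_i$): the close centers themselves must satisfy a Besicovitch-type separation $|c_i - c_j| > r_i \geq 2|c_i - x|$, so a direct packing bound in $\RR^d$ applies, while the far indices fall under the angle argument above. Each subfamily is bounded by a dimensional constant, and summing yields the claimed $K^d$.
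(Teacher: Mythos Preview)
Your outline is the classical Besicovitch construction and is essentially correct, but it is a genuinely different route from the paper's. The paper does \emph{not} redo the greedy selection or the angle lemma at all: it simply invokes Mattila's proof of the covering theorem, which already yields the constant $16^d C_d$, where $C_d$ is the maximal number of points on $\mathbb{S}^{d-1}$ at pairwise Euclidean distance $\geq 1$. The only work the paper does is to check that $C_d \leq K^d$ for a universal $K$, via the spherical-distance bound $d_{\mathbb{S}^{d-1}}(y_r,y_s) \geq \pi/3$ and a cap-packing count. So the paper's proof is a one-line citation plus a verification of the \emph{form} of the constant; your proposal reproduces the whole covering theorem from scratch. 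Your version is self-contained and would make the paper independent of Mattila, at the cost of several pages; the paper's version is a two-sentence reduction to a known result.

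Two small slips in your write-up are worth flagging. First, the concentric shrunken balls are not pairwise disjoint at factor $\tfrac{1}{2}$ under the relations you derive ($|c_i-c_j|>r_i$ and $r_j\leq 2r_i$); you need factor $\tfrac{1}{3}$ (or $\tfrac{1}{5}$ in some presentations) for the volume argument that forces $r_k\to 0$. Second, the justification ``since $c_i$ was still available at step $j$'' for $r_j\leq 2r_i$ is backwards: $c_i$ was already selected at step $i<j$. What you want is that the ball centred at $c_j$ was a candidate at step $i$ (because $c_j\notin\bigcup_{k<i}\overline{B}_k$), whence $r_j\leq R_i\leq 2r_i$. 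Neither issue affects the overall strategy.
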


\begin{proof}
	The proof of Besicovitch's theorem in \cite[Theorem~2.7]{Mattila-99} establishes that the
	statement of the proposition holds with $K^d = 16^d C_d$,
	where $C_d$ is chosen such that the following implication is true:
	If $y_1,\dots,y_n\in\mathbb{S}^{d-1}$ are points with $|y_r-y_s|\geq 1$ for all $r\neq s$, then $n\leq C_d$.

	Since for such points the spherical distance $d_{\mathbb{S}^{d-1}}(y_r,y_s)$ of $y_r$ and $y_s$ can be bounded from below by
	\[
		d_{\mathbb{S}^{d-1}}(y_r,y_s) = \arccos\Bigl( 1-\frac{\abs{y_r-y_s}^2}{2}\Bigr) \geq \arccos(1-1/2) = \pi/3,
	\]
	it is easy to see that $C_d$ can be bounded by the $d$-th power of a universal constant, which proves the proposition.
\end{proof}

For given $N\in\NN$ let $A = B(0,CN^{1/2})$, where $C=32d(1+\sqrt{\log(K^d)})$.
It is then clear that the assumptions of Proposition~\ref{prop:Besicovitch} are fulfilled for $A$ and the family of balls $\cB=\{\overline{B(x,\rho(x))}\colon x\in A\}$.
This shows that there is a subset $\cI\subset\NN$ and a collection of points $(y_k)_{k\in\cI}\subset A$ such that the balls $Q_k=B(y_k,\rho(y_k))$
satisfy $\abs{A\setminus\bigcup_{k\in\cI}Q_k} = 0$.
Setting $Q_0 = \RR^d\setminus\bigcup_{k\in\cI}Q_k$, the family $(Q_k)_{k\in\cJ}$, $\cJ=\cI\cup\{0\}$, is clearly an essential covering of $\RR^d$ satisfying
\[
	\sum_{k\in\cJ}\indic_{Q_k} \leq K^d =: \kappa.
\]
Note that by construction we have $\cI=\{k\in\cJ\colon Q_k\cap A\neq \emptyset\}=\cJ_c$.

It remains to verify that $(Q_k)_{k \in \cJ}$ satisfies Hypothesis~\ref{hypothesis}:
It is easy to see that (i) is satisfied with $l_k = (2\rho(y_k),\dots,2\rho(y_k))$, and
\[
	\frac{\abs{Q_k}}{(\diam\,Q_k)^d} = \frac{\tau_d\rho(y_k)^d}{(2\rho(y_k))^d} = \frac{\tau_d}{2^d}
\]
shows that condition (ii) holds with $\eta=\tau_d/2^d$, where we have chosen $\Psi_k$ as the identity.
In particular, $(24d\tau_d)/\eta =2^d24d  \leq 48^d$.
Since $y_k\in A$ for all $k\in\cJ_c=\cI$, we have $|y_k|\leq CN^{1/2}$ and consequently
\[
	\rho(y_k)\leq 2RCN^{(1-\eps)/2}\quad\text{for all}\quad k\in\cJ_c.
\]
Combining this with the identity for $l_k$ stated above, we obtain
\[
	\norm{l_k}_1\leq 2d\rho(y_k) \leq DN^{(1-\eps)/2},\quad D=4dRC.
\]
This proves condition (iii). Thus, Hypothesis~\ref{hypothesis} is satisfied.

Using again $|y_k|\leq CN^{1/2}$ for $k\in\cJ_c$, we see that the hypothesis on the set $S$ in Theorem~\ref{thm:main-result-increasing-balls} yields
\[
	\frac{|S\cap Q_k|}{|Q_k|}\geq \gamma^{1+(CN^{1/2})^\alpha} \geq \Bigl(\gamma^{1+C^\alpha}\Bigr)^{N^{\alpha/2}}.
\]
We apply Theorem~\ref{thm:gen} with $\gamma$ replaced by $\gamma^{1+C^\alpha}$.
After adapting the constant $K$ appropriately, we have $\kappa\leq K^d$, $1+{C}^\alpha \leq  K^{1+\alpha} d^{3\alpha/2}$, and $D\leq Rd^{5/2}K$.
Hence, it is easy to see that
\[
	(1+{C}^\alpha)\cdot 7\bigl( 800\euler\sqrt{d} D(D+1) + \log(4\kappa^{1/2})\bigr)
	\leq
	K^{1+\alpha} d^{(11+3\alpha)/2} (1+R)^2
\]
and we thereby obtain the precise constant in~\eqref{thm:main-result-increasing-balls}.\hfill\qedsymbol

\begin{remark}
	Note that the Besicovitch covering theorem holds for more general convex shapes than just balls, see, e.g.,~\cite[Theorem~1.16]{Heinonen-01},
	and therefore Theorem~\ref{thm:main-result-increasing-balls} and its proof can be adapted accordingly.
\end{remark}

\newcommand{\etalchar}[1]{$^{#1}$}

\end{document}